\newcommand\qed{{\hspace*{\fill}Q.E.D.\vskip12pt plus 1pt}}
\newcommand\Pic[1]{\hbox{\rm Pic(}#1\hbox{\rm )}}
\def\Coker{\operatorname{Coker}}
\def\h{\hat}
\def\rank{\operatorname{rank}}
\def\inv{\operatorname{inv}}
\def\ara{\operatorname{ara}}
\def\lim{\operatorname{lim}}
\def\Lef{\operatorname{Lef}}
\def\Leff{\operatorname{Leff}}
\def\Tors{\operatorname{Tors}}
\def\Alb{\operatorname{Alb}}
\def\an{\operatorname{an}}
\def\Spec{\operatorname{Spec}}
\def\red{\operatorname{red}}
\def\mod{\operatorname{mod}}
\def\Proj{\operatorname{Proj}}
\def\codim{\operatorname{codim}}
\def\Pic{\operatorname{Pic}}
\def\id{\operatorname{id}}
\def\rank{\operatorname{rank}}
\def\Coker{\operatorname{Coker}}
\def\Im{\operatorname{Im}}
\def\lcm{\operatorname{lcm}}
\newcommand{\car}{\mathrm{char}}
\newcommand\proof{\noindent{\em Proof.}\ \ }
\newtheorem{theorem}{Theorem}[section]
\newtheorem{lemma}[theorem]{Lemma}
\newtheorem{corollary}[theorem]{Corollary}
\newtheorem{prop}[theorem]{Proposition}
\newtheorem{question}[theorem]{Question}
\newtheorem{definition}[theorem]{Definition}
\newtheorem{rem}[theorem]{Remark}
\newtheorem{rems}[theorem]{Remarks}
\newtheorem{pargrph}[theorem]{}
\newtheorem{examp}[theorem]{Example}
\newtheorem{examps}[theorem]{Examples}
\newtheorem{MM}[theorem]{ }
\newtheorem{res}[theorem]{Remarks}
\renewcommand{\qed}{\hfill $\square$}
\newenvironment{rem*}{\begin{rem}\em}{\end{rem}}
\newenvironment{rems*}{\begin{res}\em}{\end{res}}
\newenvironment{example*}{\begin{examp}\em}{\end{examp}}
\newenvironment{definition*}{\begin{definition}\em}{\end{definition}}
\newenvironment{question*}{\begin{question}\em}{\end{question}}
\newenvironment{MM*}{\begin{MM}\em}{\end{MM}}
\newenvironment{prgrph*}[1]{\indent\begin{pargrph}{\bf #1.}\em\
}{\end{pargrph}}
\begin{document}

\title{{Grothendieck-Lefschetz Theory, Set-Theoretic Complete Intersections and  Rational Normal Scrolls}
}
\author{Lucian B\u adescu and Giuseppe Valla}
\date{}

\maketitle

\begin{abstract} 
\noindent Using the Grothendieck-Lefschetz theory (see \cite{[SGA2]}) we prove a criterion 
to deduce that certain subvarieties of $\mathbb P^n$ of dimension $\geq 2$ are not set-theoretic complete intersections  (see Theorem 1 of the Introduction). As applications we give a number of relevant examples. In the last part of the paper we prove that the arithmetic rank of a rational normal $d$-dimensional scroll $S_{n_1,\ldots,n_d}$ in $\mathbb P^N$ is $N-2$, by producing an explicit set of $N-2$ homogeneous equations which define these scrolls set-theoretically (see Theorem 2 of the Introduction).
\end{abstract}

\section*{Introduction} 

Let us start by recalling  the following definition.

\medskip

\noindent{\bf Definition.}  Let $Y$ be a closed irreducible subvariety of the projective space $\mathbb P^n$, and denote by $\mathscr I_+(Y)$ the homogeneous prime ideal generated by all the homogeneous polynomials in $k[T_0,T_1,\ldots, T_n]$ (in $n+1$ variables) that vanish at each point of $Y$. If $f_1,\ldots,f_r\in k[T_0,T_1,\ldots, T_n]$ are  homogeneous polynomials, denote also by $\mathscr V_+(f_1,\ldots,f_r)$ the locus of points of $\mathbb P^n$ where $f_1,\ldots,f_r$ vanish. The {\em arithmetic rank} of $Y$ in $\mathbb P^n$,  denoted by  $\ara(Y)$,  is the minimal number of homogeneous equations needed to define $Y$  set-theoretically in $\mathbb P^n$, i.e. $\ara(Y)$ is the minimal natural number $r$ for which  there exist $r$ homogeneous polynomials $f_1,\ldots,f_r\in k[T_0,T_1,\ldots,T_n]$  such that $\mathscr V_+(f_1,\ldots,f_r)=Y$. By Nullstellensatz, $\ara(Y)$ is the minimal natural number $r$ for which there exist $r$ homogeneous polynomials $f_1,\ldots,f_r\in k[T_0,T_1,\ldots,T_n]$ such that $\mathscr I_+(Y)=\root\of {(f_1,\ldots,f_r)}$.   Clearly, $\ara(Y)\geq\codim_{\mathbb P^n}(Y)$.
If  $\ara(Y)=\codim_{\mathbb P^n}(Y)$, we say that $Y$ is a {\em set-theoretic complete intersection} in $\mathbb P^n$.

\medskip

This paper has two main parts. In the first part we show how the Grothendieck-Lefschetz theory (see \cite{[SGA2]}) can be used to provide necessary conditions for a given subvariety $Y$ of dimension $d\geq 2$ of the projective space $\mathbb P^n$ (over an algebraically closed field of arbitrary characteristic)
to be a set-theoretic complete intersection in $\mathbb P^n$. We shall illustrate this through a number of relevant examples. In the second part of the paper we show that the arithmetic rank of any rational normal scroll $S$ of dimension $\geq 2$ in $\mathbb P^N$ is $N-2$, by exhibiting an explicit minimal set of $N-2$ defining equations for $S$.

The paper is organized as follows. In Section 1 we recall some basic results from Grothendieck-Lefschetz theory that are going to be used in Section 2. We also recall two Lefschetz theorems (for singular cohomology and for \'etale cohomology) that will be used in Section 4.

In Section 2, using Grothendieck-Lefschetz theory (together with some basic results from the theory of Picard schemes, see Grothendieck \cite{[FGA]}), we prove the following result (see also Theorem 
\ref{main} below):

\medskip

\noindent{\bf Theorem 1} {\em Let $Y$ be a closed irreducible
subvariety of $\mathbb P^n$ of dimension $\geq 2$ over an algebraically closed field $k$
of characteristic $p\geq 0$. 
\begin{enumerate}\item[\em i)] If $Y$ is a set-theoretic complete intersection in $\mathbb P^n$ then $Y$
is algebraically simply connected, i.e. there are no non-trivial connected \'etale covers of $Y$.
\item[\em ii)]
Assume  that $p=0$ and  $Y$ normal. If  $H^1(\mathscr O_Y)\neq 0$, then $Y$ is not a set-theoretic complete intersection in $\mathbb P^n$.
\item[\em iii)] Assume  that $p>0$ and  $Y$ is normal. If $H^1(\mathscr O_Y)\neq 0$ and the Picard scheme $\underline{\Pic}_Y^0$ of $Y$ is reduced, then $Y$ is not a set-theoretic complete intersection in $\mathbb P^n$. $($If for example $H^2(\mathscr O_Y)=0$, then  $\underline{\Pic}_Y^0$ is always reduced, see $\cite{[FGA]}$, \'Expos\'e $236$, Proposition
$2.10$, $ii))$.
\item[\em iv)]  Assume that $Y$ is a set-theoretic complete intersection in $\mathbb P^n$. Then the restriction map $\alpha\colon\Pic(\mathbb P^n)\to\Pic(Y)$ is injective and $\Coker(\alpha)$ is torsion-free if $p=0$, and has no $s$-torsion for every integer $s>0$ which is prime to $p$, if $p>0$. 
\item[\em v)] Assume that there exists a line bundle $L$ on $Y$ and an integer $s\geq 2$ such that 
$\mathscr O_Y(1)\cong L^{\otimes s}$. If $p>0$ assume moreover that $s$ is prime to $p$. Then $Y$ is not a set-theoretic complete intersection in $\mathbb P^n$.
\item[\em vi)] Assume that $Y$ is a set-theoretic complete intersection of dimension $\geq 3$. If $p=0$ then the restriction map $\Pic(\mathbb P^n)\to\Pic(Y)$ is an isomorphism. If $p>0$ and $Y$ is nonsingular, then $\Pic(Y)/\mathbb Z[\mathscr O_Y(1)]$ is a finite $p$-group $($and in particular, $\rank\Pic(Y)=1)$.
\end{enumerate}}

\medskip

In some special cases, parts of Theorem 1 are known. To our best knowledge the approach to prove Theorem 1, which is based on Grothendieck-Lefschetz theory, is new. For instance, if $Y$ is a nonsingular closed subvariety of the complex projective space $\mathbb P^n_{\mathbb C}$, part ii) is an old result of Hartshorne \cite{Ha3}, while part iii) is new. Part vi) is known in characteristic zero (see \cite{N2}). However, we give another proof based on a result of Grothendieck \cite{[SGA2]} (see also Theorem \ref{gro2}  below). In Section 3 we prove Theorem 1 using Grothendieck-Lefschetz theory \cite{[SGA2]}, although parts ii) and iii) also require some basic results from the theory of Picard schemes, see Grothendieck \cite{[FGA]}. 

In Section 3, we apply Theorem 1 to provide several examples (in any characteristic) of subvarieties $Y\subseteq \mathbb P^n$ of dimension $\geq 2$ that cannot be set-theoretic complete intersections in $\mathbb P^n$.

In Section 4 we determine the arithmetic rank of the rational normal scolls. Specifically, given the integers $\{d, n_1,\ldots n_d\}$ such that $d\geq 2$ and $n_i\geq 1$, $i=1,\ldots,d$, let us consider
the $d$-dimensional rational normal scroll 
$$S_{n_1,\ldots,n_d}:=\mathbb P(\mathscr O_{\mathbb P^1}(n_1)\oplus\cdots\oplus\mathscr O_{\mathbb P^1}(n_d))$$
embedded in $\mathbb P^N$ via the very ample complete linear system $|\mathscr O_{\mathbb P(\mathscr O_{\mathbb P^1}(n_1)\oplus\cdots\oplus\mathscr O_{\mathbb P^1}(n_d))}(1)|$, where  $N:=\sum_{i=1}^dn_i+d-1$ and $\mathbb P(\mathscr O_{\mathbb P^1}(n_1)\oplus\cdots\oplus\mathscr O_{\mathbb P^1}(n_d))$
is the projective bundle associated to the vector bundle $\mathscr O_{\mathbb P^1}(n_1)\oplus\cdots\oplus\mathscr O_{\mathbb P^1}(n_d)$ over $\mathbb P^1$.

We prove the following result (see  Theorem \ref{mainscroll} and  Corollary \ref{mainscroll1} below):

\medskip
\noindent{\bf Theorem 2} {\em Under the above notation and assumptions, the arithmetic rank of $S_{n_1,\ldots,n_d}$ in $\mathbb P^N$  is $N-2=\sum_{i=1}^dn_i+d-3$. In particular, $S_{n_1,\ldots,n_d}$ is a set-theoretic complete intersection in $\mathbb P^N$
if and only if $\dim(S_{n_1,\ldots,n_d})=2$.}

\medskip
The fact that the $2$-dimensional rational normal scrolls $S_{n_1,n_2}$ are set-theoretic complete intersections in  $\mathbb P^{n_1+n_2+1}$ was already known, see   Valla \cite{Va} and  Robbiano-Valla \cite{RoVa} in some special cases, and subsequently, Verdi \cite{V1} in general.  Our approach provides in particular a new proof of the result of Verdi 
\cite{V1} for the two-dimensional rational normal scrolls. Moreover, in general our homogeneous equations are of lower degree than Verdi's equations. 

As far as the proof of Theorem 2 is concerned, we notice that the inequality $\ara(S_{n_1,\ldots,n_d})\geq N-2$ is of topological nature. In fact, in characteristic zero this inequality is a consequence of a generalization (due to Lazarsfeld \cite{La}) of a topological result of Sommese \cite{So} (see Corollary \ref{laz1} below), while in positive characteristics it follows from an analogous result in the \'etale cohomology, essentially due to Lyubeznik \cite{Ly} (see Theorem \ref{etale} below). 

So the problem is reduced to proving the reverse inequality $\ara(S_{n_1,\ldots,n_d})\leq N-2$.  And this is done  by exhibiting  $N-2$ explicit homogeneous equations defining $S_{n_1,\ldots,n_d}$ set-theoretically in $\mathbb P^N$.

 Throughout this paper we shall fix an algebraically closed field $k$ of characteristic $p\geq 0$. All algebraic varieties that will occur will be defined over $k$. The terminology and the notation used are standard, unless otherwise explicitly stated.  
 \medskip
 
 \noindent{\bf Acknowledgement.}  The authors want to thank Aldo Conca for some useful discussions.

\section{Background material}\label{first}\addtocounter{subsection}{1}

In this section we recall some well-known theorems that will be used in the sequel. We start with some basic facts from  Grothendieck-Lefschetz theory (see \cite{[SGA2]}).

\begin{definition}[Grothendieck \cite{[SGA2]}]\label{(2.2.1)} {\em Let $Y$ be a closed subvariety 
of a projective variety $X$. We say that
the pair $(X,Y)$ satisfies the {\it Grothendieck-Lefschetz condition}
$\Lef(X,Y)$ if for every open subset $U$ of $X$ containing $Y$ and for every vector
bundle $E$ on $U$ the natural map $H^0(U,E)\to H^0(X_{/Y},\h E)$ is an
isomorphism, where $X_{/Y}$ is the formal completion of $X$ along $Y$, $\pi\colon X_{/Y}\to U$ the canonical morphism, and  $\h E:=\pi^*(E)$. We also say that $(X,Y)$ satisfies the {\it effective
Grothendieck-Lefschetz condition} $\Leff(X,Y)$ if the Grothendieck-Lefschetz
condition $\Lef(X,Y)$ holds and, moreover, for every formal vector bundle
$\mathscr E$ on $X_{/Y}$ there exists an open subset $U$ of $X$ and a vector
bundle $E$ on $U$ such that $\mathscr E\cong\h E$.}\end{definition}

\begin{theorem}[Grothendieck \cite{[SGA2]}, or also \cite{Ha2}, Theorem 1.5, page 172]\label{gro} Let $X$ be closed irreducible subvariety of $\mathbb P^n$. Let $Y$ be a complete intersection subscheme of
$X$ with $r$ hyperplanes of $\mathbb P^n$, and assume that $\dim(Y)=\dim(X)-r\geq 2$. If in addition $Y$ is contained in the nonsingular locus of $X$, then
the effective Grothendieck-Lefschetz condition $\Leff(X,Y)$ holds.
\end{theorem}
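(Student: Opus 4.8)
The plan is to reduce the statement to Grothendieck's fundamental theorem on the Lefschetz conditions for a single ample divisor, applied repeatedly along the complete intersection flag, and then to assemble the steps by transitivity. Writing $Y=X\cap H_1\cap\cdots\cap H_r$, I would introduce the chain $X=Z_0\supseteq Z_1\supseteq\cdots\supseteq Z_r=Y$ with $Z_i=Z_{i-1}\cap H_i$. Since $Y$ is a complete intersection of the expected dimension $\dim X-r$ contained in the nonsingular locus, the sections $H_1,\ldots,H_r$ form a regular sequence on $\sO_X$ along $Y$, so each $Z_i$ is an effective Cartier divisor on $Z_{i-1}$ cut out by the ample class $\sO_X(1)$, and $\dim Z_i=\dim X-i\geq\dim Y\geq 2$. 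The aim at each step is to verify $\Leff(Z_{i-1},Z_i)$ and to compose these into $\Leff(Z_0,Z_r)=\Leff(X,Y)$, using that the formal completion factors as $X_{/Y}=(X_{/Z_1})_{/Y}=\cdots$ and that both the comparison maps on $H^0$ and the algebraization functors compose.

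Two inputs drive each step. The first is positivity: the complement of an ample effective divisor is affine, so by the Enriques--Severi--Zariski vanishing (equivalently Serre vanishing, $\sO_X(1)$ being ample) the cohomology of the punctured neighbourhoods is controlled and the local cohomology groups $H^i_{Z_i}(\sO_X(n))$ vanish for $n\ll 0$ in the relevant range. The second is depth: because $Z_i$ lies in the nonsingular locus of $X$ near $Y$ and is cut out there by a regular sequence, $\sO_X$ is Cohen--Macaulay (indeed regular) along $Y$, giving maximal depth and hence $H^i_Y(\sO_X)=0$ for $i\leq 2$. Combining the two yields the comparison isomorphism on $H^0$ that defines $\Lef$; the hypothesis $\dim Y\geq 2$ is precisely the threshold that upgrades the available vanishing from the $H^{\leq 1}$ range (enough for $\Lef$) to the $H^{\leq 2}$ range needed to control the obstructions to effectivity.

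For the effective statement one must, in addition, algebraize formal vector bundles, and here the plan is to invoke Grothendieck's existence theorem: a formal vector bundle on $X_{/Y}$ whose deformation-theoretic obstructions lie in the $H^1$ and $H^2$ groups already shown to vanish in the previous step lifts, compatibly, to an honest vector bundle on some open $U\supseteq Y$. I expect the main obstacle to be exactly this effectivity together with the transitivity bookkeeping: the intermediate members $Z_i$ of the flag may meet $\Sing(X)$ at points away from $Y$, so the depth criterion must be applied \emph{along $Y$} rather than along all of $Z_i$, and one must check that algebraizing step by step terminates in a genuine bundle on a neighbourhood of $Y$ in $X$ rather than merely on the formal scheme. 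It is in reconciling these two features---local regularity concentrated along $Y$ against possible singularities of the larger $Z_i$---that the hypothesis $Y\subseteq X\setminus\Sing(X)$ and the full strength of the Grothendieck existence theorem are needed.
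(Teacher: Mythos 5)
You should first be aware that the paper does not prove this statement: Theorem \ref{gro} is quoted as background from \cite{[SGA2]} and \cite{Ha2} (Chap.~IV, Theorem 1.5), so your proposal has to be measured against the standard proof found there. That proof is a \emph{direct} attack on the pair $(X,Y)$, with no flag of intermediate sections: since $Y$ lies in the smooth locus of $X$ and is cut out there by a regular sequence of hypersurface sections, the conormal sheaf splits as $\sI/\sI^2\cong\bigoplus_{i=1}^r\sO_Y(-d_i)$ with $d_i>0$; feeding $\sI^m/\sI^{m+1}\cong S^m(\sI/\sI^2)$ into the exact sequences linking consecutive infinitesimal neighbourhoods, one kills the relevant $H^0$ and $H^1$ by Serre-type and Enriques--Severi--Zariski vanishing (the latter is not ``equivalently Serre vanishing''; it is the statement $H^1(Y,F(-m))=0$ for $m\gg0$ under a depth~$\geq 2$ hypothesis, which is exactly where $\dim Y\geq 2$ and the Cohen--Macaulayness of $\sO_Y$ enter), and effectivity is obtained by showing that $\bigoplus_m H^0(X_{/Y},\sE(m))$ is a finitely generated graded module over the homogeneous coordinate ring of $X$ and sheafifying.

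The genuine gap in your proposal is the architecture itself: inducting along $X=Z_0\supseteq Z_1\supseteq\cdots\supseteq Z_r=Y$ and composing fails for two reasons. First, the inductive step $\Leff(Z_{i-1},Z_i)$ is a statement about \emph{all} of $Z_i$, and the one-divisor theorem needs depth/regularity of $Z_{i-1}$ along all of $Z_i$; but only $Y$ is assumed to lie in $X\setminus\Sing(X)$, and the intermediate $Z_i$ may be non-reduced or of low depth at points of $Z_{i+1}$ far from $Y$, so the step may simply be false --- this is not ``bookkeeping'' but the reason the standard proof avoids the flag altogether. Second, even granting each step, $\Leff$ does not compose in the way you assert: $\Leff(Z_1,Z_2)$ compares bundles on opens of the \emph{variety} $Z_1$ with formal bundles on $(Z_1)_{/Z_2}$, whereas passing to $X_{/Y}$ would require the analogous statement for the \emph{formal scheme} $X_{/Z_1}$ completed along $Z_2$; knowing that $H^0(U,E)\to H^0(X_{/Z_1},\hat E)$ is an isomorphism says nothing about the composite $H^0(U,E)\to H^0(X_{/Y},\hat E)$, and no such transitivity lemma is available. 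Finally, the algebraization of a formal vector bundle here is not an instance of Grothendieck's existence theorem (which concerns coherent sheaves on schemes proper over a complete local ring), nor is it governed by deformation-theoretic obstructions in $H^1$ and $H^2$; it is the finiteness-of-the-section-module argument above, which is the second place where $\dim Y\geq 2$ is indispensable.
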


\begin{theorem}[Grothendieck \cite{[SGA2]}, Expos\'e X, Th\'eor\`eme 3.10]\label{gro1} Let $Y$ be a closed subvariety of $\mathbb P^n$ such that the effective Grothendieck-Lefschetz condition $\Leff(\mathbb P^n, Y)$ holds. Then $Y$ is algebraically simply connected, i.e. there are no non-trivial connected \'etale covers of $Y$.\end{theorem}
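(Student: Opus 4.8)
The plan is to reformulate the statement about finite étale covers of $Y$ as a statement about vector bundles carrying an algebra structure, so that the hypothesis $\Leff(\mathbb{P}^n,Y)$ can be applied, and then to deduce triviality from the simple connectedness of $\mathbb{P}^n$ itself. A connected finite étale cover $Y'\to Y$ is the same datum as a finite locally free $\mathscr{O}_Y$-algebra $\mathscr{A}$ that is étale over $\mathscr{O}_Y$; in particular $\mathscr{A}$ is a vector bundle on $Y$ equipped with a multiplication, a unit, and the étaleness condition (nondegeneracy of the trace form, equivalently vanishing of the relative differentials). Assume for contradiction that such a nontrivial cover exists. Writing $Y_m$ for the infinitesimal thickenings of $Y$ in $\mathbb{P}^n$, the formal completion $\mathbb{P}^n_{/Y}$ has the same underlying space as $Y$, and by topological invariance of the étale site each $Y_m$ has the same finite étale covers as $Y$; passing to the limit, the category of finite étale covers of the formal scheme $\mathbb{P}^n_{/Y}$ is equivalent to that of $Y$. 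Thus our cover produces a nontrivial formal finite étale cover, i.e. a formal vector bundle $\hat{\mathscr{A}}$ on $\mathbb{P}^n_{/Y}$ together with a formal-algebra structure and formal étaleness.

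Next I would algebraize this datum using $\Leff(\mathbb{P}^n,Y)$. The effective condition furnishes an open $U\supseteq Y$ and a vector bundle $A$ on $U$ with $\hat{A}\cong\hat{\mathscr{A}}$. Applying the isomorphism $H^0(U,E)\cong H^0(\mathbb{P}^n_{/Y},\hat{E})$ of $\Lef(\mathbb{P}^n,Y)$ to the bundles $E=\mathscr{H}om(A\otimes A,A)$ and $E=\mathscr{H}om(\mathscr{O}_U,A)$ gives, via the identifications $\widehat{A\otimes A}\cong\hat A\otimes\hat A$ and $\widehat{\mathscr{H}om}\cong\mathscr{H}om(\hat{\,\cdot\,},\hat{\,\cdot\,})$, the fully faithfulness of completion on bundles over $U$; this lets me transport the multiplication and unit of $\hat{\mathscr{A}}$ to morphisms $A\otimes A\to A$ and $\mathscr{O}_U\to A$ on $U$. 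The associativity and unit axioms are equalities of global sections of such bundles and hence descend by the same fully faithfulness. Étaleness is an open condition that holds at every point of $Y$ (since it holds formally along $Y$), so after shrinking $U$ I obtain a genuine finite étale cover $U'\to U$ whose completion recovers the formal cover.

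Finally I would return to $\mathbb{P}^n$. The complement $Z:=\mathbb{P}^n\setminus U$ is closed and disjoint from $Y$; since $\dim Y\geq 2$, no hypersurface of $\mathbb{P}^n$ can avoid $Y$ (by the projective dimension theorem a divisor meets every subvariety of positive dimension), so $Z$ contains no divisor and $\codim_{\mathbb{P}^n}Z\geq 2$. By Zariski--Nagata purity of the branch locus, the normalization of $\mathbb{P}^n$ in $U'$ is finite étale over all of $\mathbb{P}^n$: its branch locus would be pure of codimension one, yet is contained in $Z$, hence is empty. As $\mathbb{P}^n$ is algebraically simply connected in every characteristic, this cover is trivial, and therefore so is its restriction $U'\to U$, and with it the original cover $Y'\to Y$, contradicting nontriviality.

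The main obstacle I anticipate is the algebraization step: $\Leff$ is stated only for vector bundles, so the algebra structure and, crucially, the étale property must be carried across the completion isomorphism and shown to descend to an actual Zariski neighborhood of $Y$. The fully-faithfulness half of the Lefschetz condition is exactly what makes the multiplication, the unit and their axioms descend, while étaleness survives because it is an open condition guaranteed along $Y$; by comparison, the passage from a neighborhood back to all of $\mathbb{P}^n$ via purity is formal once the codimension-$\geq 2$ bound on $Z$ is observed.
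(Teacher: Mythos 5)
The paper does not prove this statement itself but cites it as Grothendieck's Th\'eor\`eme 3.10 of SGA~2, Expos\'e X, and your argument is a correct reconstruction of essentially that proof: identify finite \'etale covers of $Y$ with those of the formal completion, algebraize via $\Leff(\mathbb P^n,Y)$ to a finite \'etale cover of a neighbourhood $U$ whose complement has codimension $\geq 2$, extend over $\mathbb P^n$ by Zariski--Nagata purity, and conclude from the algebraic simple connectedness of $\mathbb P^n$. There are no gaps; the only cosmetic remark is that you invoke $\dim Y\geq 2$ where $\dim Y\geq 1$ already guarantees that $Y$ meets every hypersurface, so that $\mathbb P^n\setminus U$ has codimension at least two.
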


\begin{theorem}[Grothendieck \cite{[SGA2]}, Expos\'e XII, Corollary 3.7]\label{gro2} Let $Y$ be a $($non necessarily reduced or irreducible$)$ subscheme of 
$\mathbb P^n$ of dimension $\geq 3$ which is a scheme-theoretic complete intersection in $\mathbb P^n$. Then the natural restriction map $\Pic(\mathbb P^n)\to \Pic(Y)$ is an isomorphism.\end{theorem}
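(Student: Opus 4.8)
The plan is to factor the restriction map through the formal completion $\mathbb P^n_{/Y}$ of $\mathbb P^n$ along $Y$ and to treat the two resulting comparisons separately. Let $\mathscr I\subset\mathscr O_{\mathbb P^n}$ be the ideal sheaf of $Y$, let $Y_m$ be the infinitesimal neighbourhood $\mathscr V_+(\mathscr I^{m+1})$ (so $Y_0=Y$), and recall that $\mathbb P^n_{/Y}=\varinjlim_m Y_m$. Restriction then factors as
$$\Pic(\mathbb P^n)\longrightarrow\Pic(\mathbb P^n_{/Y})=\varprojlim_m\Pic(Y_m)\longrightarrow\Pic(Y),$$
and I would show that each arrow is an isomorphism. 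Injectivity of the composite is immediate from $\Pic(\mathbb P^n)=\mathbb Z[\mathscr O(1)]$ together with the fact that $\mathscr O_Y(m)$ is ample for $m>0$ and anti-ample for $m<0$, hence never trivial; so the real content is surjectivity.

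For the comparison $\Pic(\mathbb P^n_{/Y})\cong\Pic(Y)$ I would run the infinitesimal argument, which is the routine cohomological half. Since $Y$ is a scheme-theoretic complete intersection of multidegree $(d_1,\dots,d_r)$, its conormal sheaf is $\mathscr N^\vee=\bigoplus_{i=1}^r\mathscr O_Y(-d_i)$ and the graded pieces of $\mathscr I$ form the symmetric algebra, so $\mathscr I^m/\mathscr I^{m+1}\cong S^m(\mathscr N^\vee)$ is a direct sum of line bundles $\mathscr O_Y(-e)$ with $e\ge 1$. The exact sequences $0\to\mathscr I^m/\mathscr I^{m+1}\to\mathscr O_{Y_m}^{*}\to\mathscr O_{Y_{m-1}}^{*}\to 0$ yield
$$H^1(Y,\mathscr I^m/\mathscr I^{m+1})\to\Pic(Y_m)\to\Pic(Y_{m-1})\to H^2(Y,\mathscr I^m/\mathscr I^{m+1}).$$
A complete intersection is arithmetically Cohen-Macaulay, so $H^i(Y,\mathscr O_Y(-e))=0$ for $0<i<\dim Y$ and all $e$; since $\dim Y\ge 3$ this kills both the $H^1$ and the $H^2$ terms for every $m\ge 1$, forcing each $\Pic(Y_m)\to\Pic(Y_{m-1})$ to be an isomorphism and hence $\Pic(\mathbb P^n_{/Y})\cong\Pic(Y)$. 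This is precisely where the hypothesis $\dim Y\ge 3$ is indispensable: for $\dim Y=2$ the $H^2$ term need not vanish and the statement genuinely fails.

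For the comparison $\Pic(\mathbb P^n)\cong\Pic(\mathbb P^n_{/Y})$ I would use Grothendieck-Lefschetz theory to produce the effective condition $\Leff(\mathbb P^n,Y)$ (which holds already for $\dim Y\ge 2$). This condition identifies formal vector bundles on $\mathbb P^n_{/Y}$ with vector bundles on open neighbourhoods of $Y$, giving $\Pic(\mathbb P^n_{/Y})=\varinjlim_{U\supseteq Y}\Pic(U)$. Now since $\dim Y\ge 1$ the subvariety $Y$ meets every hypersurface of $\mathbb P^n$, so the complement $\mathbb P^n\setminus U$ of any neighbourhood $U$ of $Y$ is disjoint from $Y$ and therefore has codimension $\ge 2$. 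Because $\mathbb P^n$ is factorial, removing a closed set of codimension $\ge 2$ does not change the Picard group, whence $\Pic(U)\cong\Pic(\mathbb P^n)$ for every such $U$; passing to the limit gives $\Pic(\mathbb P^n)\cong\Pic(\mathbb P^n_{/Y})$, and combined with the previous step this proves the theorem.

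I expect the main obstacle to be the establishment of $\Leff(\mathbb P^n,Y)$: its effective part is the algebraization of formal line bundles, a deep Grothendieck-existence-type statement, and one must also bridge the gap between the hyperplane hypothesis of Theorem \ref{gro} and the higher-degree hypersurfaces that cut out $Y$. I would handle the latter either by a Veronese re-embedding when the $d_i$ coincide, or in general by factoring $Y\subset\mathbb P^n$ through the flag of ample-divisor complete intersections $\mathbb P^n\supset\mathscr V_+(f_1)\supset\cdots\supset\mathscr V_+(f_1,\dots,f_r)=Y$ and composing the effective Lefschetz conditions of the successive ample-divisor pairs, each legitimate because all the intermediate complete intersections are Cohen-Macaulay of dimension $\ge 3$. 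By contrast, the infinitesimal step is entirely mechanical once the complete-intersection structure is used.
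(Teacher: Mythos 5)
The paper does not actually prove this statement: Theorem \ref{gro2} is quoted as a black box from SGA2 (Expos\'e XII, Corollaire 3.7) and then invoked in the proof of Theorem \ref{main}, vi). So there is no in-paper proof to compare against; what you have written is, in substance, Grothendieck's original argument, and it is correct. The factorization $\Pic(\mathbb P^n)\to\Pic(\mathbb P^n_{/Y})\to\Pic(Y)$, the identification $\mathscr I^m/\mathscr I^{m+1}\cong S^m(\mathscr N^\vee)$ with $\mathscr N^\vee$ a sum of negative line bundles (valid because the $f_i$ form a regular sequence), the vanishing of $H^1$ and $H^2$ of all twists $\mathscr O_Y(-e)$ in the range $0<i<\dim Y$ --- which is exactly where $\dim Y\ge 3$ is used, and which indeed fails for surfaces --- and the passage from the formal completion to open neighbourhoods via $\Leff(\mathbb P^n,Y)$ combined with the codimension-$\ge 2$ complement argument are precisely the ingredients of the SGA2 proof, and they also mirror the machinery the paper develops independently in Proposition \ref{tors}, Corollary \ref{tors"} and Lemma \ref{leffc}.

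The one soft spot is your closing paragraph on how to obtain $\Leff(\mathbb P^n,Y)$ for hypersurfaces of degree $>1$. Composing effective Lefschetz conditions along the flag $\mathbb P^n\supset\mathscr V_+(f_1)\supset\cdots\supset Y$ is delicate: the intermediate complete intersections may be singular, so the hypothesis of Theorem \ref{gro} that the smaller member lie in the nonsingular locus of the larger can fail, and the transitivity $\Leff(X,X_1)\wedge\Leff(X_1,X_2)\Rightarrow\Leff(X,X_2)$ is not formal (the two conditions speak about vector bundles on open subsets of different ambient spaces). The clean route is the Veronese one, and it is available in full generality, not only when the $d_i$ coincide: the condition $\Leff(\mathbb P^n,Y)$ depends only on the underlying closed set $Y_{\red}$, since it is phrased entirely in terms of the formal completion $\mathbb P^n_{/Y}=\mathbb P^n_{/Y_{\red}}$. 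One may therefore replace each $f_i$ by a power $f_i^{e_i}$ so that all degrees become equal to some $d$, and then the $d$-uple embedding exhibits the (thickened) scheme as a complete intersection of hyperplanes with the image of $\mathbb P^n$, to which Theorem \ref{gro} applies directly; this is exactly the reduction implicit in the paper's Lemma \ref{leffc}. With that substitution your argument is complete, the only remaining external input being the Grothendieck existence theorem underlying the effectivity part of $\Leff$.
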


In the last section we shall also make use of two further  Lefschetz type results. The first one regards the singular cohomology (see \cite{La}, (1.8)) and generalizes earlier results due to Sommese \cite{So} and Newstead \cite{N1} and \cite{N2} . Instead the second one uses the \'etale cohomology.

\begin{theorem}\label{laz} Let $X$ be a nonsingular projective variety over $\mathbb C$ of dimension $n\geq 2$, and let $E$ be an ample vector bundle of rank $e$ on $X$. Let $s\in\Gamma(X,E)$ be a global section of $S$ and let $Y=Z(s)$ be the zero locus of $s$. Then the natural restriction map of singular cohomology groups
$$H^i(X,\mathbb Z)\to H^i(Y,\mathbb Z)$$
is an isomorphism for every $i<n-e$, and injective if $i=n-e$.
\end{theorem}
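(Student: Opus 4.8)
\noindent\emph{Proof strategy.} The plan is to deduce this vector-bundle Lefschetz theorem from the classical weak Lefschetz theorem for an ample divisor by projectivizing $E$. Let $\pi\colon P=\mathbb{P}(E)\to X$ be the projective bundle in Grothendieck's convention, so that $\pi_*\mathcal{O}_P(1)=E$ and $H^0(P,\mathcal{O}_P(1))=H^0(X,E)$; then $P$ is smooth projective of dimension $N:=n+e-1$, and $\mathcal{O}_P(1)$ is ample precisely because $E$ is ample (Hartshorne's ampleness criterion). The section $s$ corresponds to a section $\tilde s\in H^0(P,\mathcal{O}_P(1))$, and I set $D:=Z(\tilde s)$, an effective ample divisor on $P$. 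The first step is to record the fibrewise geometry: over a point $x$ with $s(x)\neq 0$ the fibre pair $(P_x,D_x)$ is $(\mathbb{P}^{e-1},\mathbb{P}^{e-2})$, a hyperplane inside the fibre, whereas over $x\in Y=Z(s)$ one has $D_x=P_x=\mathbb{P}^{e-1}$. Equivalently, $\pi$ restricts to an $\mathbb{A}^{e-1}$-bundle $P\setminus D\to X\setminus Y$ and to the full $\mathbb{P}^{e-1}$-bundle over $Y$.

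Second, I would establish the weak Lefschetz vanishing for the ample divisor $D\subset P$ in relative form, namely $H^i(P,D;\mathbb{Z})=0$ for all $i\le\dim D=N-1=n+e-2$. Since $D$ is singular in general, its fibre dimension jumping over $Y$, I cannot quote the smooth version directly; instead I would use that $U:=P\setminus D$ is a \emph{smooth affine} variety of complex dimension $N$, being the complement of an effective ample divisor. By Andreotti--Frankel it has the homotopy type of a CW complex of real dimension $\le N$, so $H_k(U;\mathbb{Z})=0$ for $k>N$. Since $P$ is compact, $H^i(P,D)\cong H^i_{\mathrm c}(U)$, and Poincar\'e duality on the oriented $2N$-manifold $U$ gives $H^i_{\mathrm c}(U)\cong H_{2N-i}(U)$; the latter vanishes for $2N-i>N$, i.e. for $i<N$, which is exactly the asserted range.

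Third, I would transport this vanishing down to $X$ through the Leray spectral sequence of the map of pairs $\pi\colon(P,D)\to X$. By the fibrewise computation of the first step, the sheaves $\mathcal{H}^q$ on $X$ with stalk $H^q(P_x,D_x)$ vanish except for $q=2e-2$, where $\mathcal{H}^{2e-2}$ is the rank-one top relative cohomology local system of the pairs $(\mathbb{P}^{e-1},\mathbb{P}^{e-2})$, supported on $X\setminus Y$ and extended by zero across $Y$. Hence the spectral sequence degenerates to a single row, and using the identification $H^\bullet\big(X,j_!\mathbb{Z}_{X\setminus Y}\big)\cong H^\bullet(X,Y)$ for the open inclusion $j\colon X\setminus Y\hookrightarrow X$, I obtain the shift isomorphism $H^i(P,D)\cong H^{i-(2e-2)}(X,Y)$. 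Combining with the second step gives $H^j(X,Y;\mathbb{Z})=0$ whenever $j+2e-2\le n+e-2$, i.e. for $j\le n-e$; by the long exact sequence of the pair $(X,Y)$ this is equivalent to saying that $H^i(X,\mathbb{Z})\to H^i(Y,\mathbb{Z})$ is an isomorphism for $i<n-e$ and injective for $i=n-e$.

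The main obstacle I anticipate is precisely this third step: justifying that the Leray spectral sequence of the pair degenerates with the correct degree shift, that $\mathcal{H}^{2e-2}$ is trivial over $\mathbb{Z}$ (so that the conclusion holds with \emph{integral} coefficients, not merely rationally), and that it is genuinely the extension by zero across $Y$, so that $H^\bullet(X,\mathcal{H}^{2e-2})$ computes $H^\bullet(X,Y)$. The degree bookkeeping is delicate exactly because the fibre dimension of $D\to X$ drops by one away from $Y$: it is this jump that converts the range $i\le n+e-2$ on $P$ into the sharp range $i\le n-e$ on $X$, so pinning the shift down to $2e-2$, rather than being off by the fibre dimension, is where the care is needed.
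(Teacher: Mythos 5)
The paper does not actually prove this statement: it is quoted as Theorem 1.5 of the cited literature (Lazarsfeld \cite{La}, (1.8), building on Sommese \cite{So}), with only a remark explaining the history. Measured against that source, your argument is correct and is essentially a reconstruction of the Sommese--Lazarsfeld proof: projectivize $E$, note that $\mathscr O_{\mathbb P(E)}(1)$ is ample and $s$ gives an effective ample divisor $D$, so $\mathbb P(E)\setminus D$ is smooth affine of dimension $n+e-1$ and Andreotti--Frankel applies. The one place you diverge is the descent from $\mathbb P(E)$ to $X$: the standard route simply observes that $\mathbb P(E)\setminus D\to X\setminus Y$ is a fibre bundle with contractible fibres $\mathbb C^{e-1}$, hence a homotopy equivalence, so $X\setminus Y$ itself has the homotopy type of a CW complex of dimension $\le n+e-1$, and one then applies Poincar\'e duality directly on the $2n$-manifold $X\setminus Y$ to get $H^i(X,Y)\cong H^i_c(X\setminus Y)\cong H_{2n-i}(X\setminus Y)=0$ for $i\le n-e$. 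Your version instead dualizes upstairs and descends via $R\pi_!$, which is why the shift $2e-2$ appears; the bookkeeping you worry about is sound ($R^q\pi_!\mathbb Z$ of a complex affine-space bundle is concentrated in degree $2e-2$ with trivial orientation local system, since the structure group is orientation-preserving, and a sheaf restricting to $\mathbb Z$ on $X\setminus Y$ with vanishing stalks along $Y$ is necessarily $j_!\mathbb Z_{X\setminus Y}$), and the two computations agree, as they must. The homotopy-equivalence route is shorter and avoids the spectral sequence entirely, and it is worth knowing because it also yields the statements for homotopy and homology groups; but your argument, as written, establishes the theorem with integral coefficients and in the stated generality (no hypothesis that $Y$ be smooth or of the expected dimension), which is exactly the form the paper needs in Section 4.
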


Using Theorem \ref{laz}, the exponential sequences for $X$ and for $Y$ and Serre's GAGA one immediately gets:

\begin{corollary}\label{laz1} Under the hypotheses of Theorem $\ref{laz}$, assume that $n-e\geq 3$. Then the natural restriction map $\Pic(X)\to\Pic(Y)$ is an isomorphism.\end{corollary}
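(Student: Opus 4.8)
The plan is to deduce Corollary~\ref{laz1} from Theorem~\ref{laz} using the standard exponential sequence argument. First I would invoke Theorem~\ref{laz} with $i=1$ and $i=2$. Since we assume $n-e\geq 3$, we have $1<n-e$, so the restriction $H^1(X,\mathbb Z)\to H^1(Y,\mathbb Z)$ is an isomorphism; likewise $2<n-e$ forces $H^2(X,\mathbb Z)\to H^2(Y,\mathbb Z)$ to be an isomorphism as well. These two isomorphisms of topological invariants are the whole topological input; everything else is a diagram chase.

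Next I would pass from the integral cohomology to the Picard groups via the exponential sequence. Working in the analytic category (legitimate by GAGA once we compare back to the algebraic Picard groups at the end), the exponential sheaf sequence $0\to\mathbb Z\to\mathscr O\to\mathscr O^{*}\to 0$ on $X$ and on $Y$ gives long exact cohomology sequences containing the segments $H^1(\mathscr O)\to H^1(\mathscr O^{*})\to H^2(\mathbb Z)\to H^2(\mathscr O)$. The restriction maps $X\to Y$ produce a morphism between these two long exact sequences, i.e.\ a commutative ladder. One subtlety is that the relevant segment also involves the coherent cohomology groups $H^1(\mathscr O_X)\to H^1(\mathscr O_Y)$ and $H^2(\mathscr O_X)\to H^2(\mathscr O_Y)$; I would note that under the ampleness hypothesis the same vanishing/restriction behaviour (a Kodaira--Nakano or Le Potier type vanishing for $H^i(X,\mathscr O_X)$ versus $H^i(Y,\mathscr O_Y)$ in the relevant range) makes these maps isomorphisms in the range $i\leq n-e-1$, so they do not obstruct the argument. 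With the two integral isomorphisms in hand, the five lemma applied to the ladder yields that $H^1(X,\mathscr O_X^{*})\to H^1(Y,\mathscr O_Y^{*})$, i.e.\ $\Pic(X^{\mathrm{an}})\to\Pic(Y^{\mathrm{an}})$, is an isomorphism.

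Finally I would descend from the analytic to the algebraic Picard groups using Serre's GAGA, which identifies $\Pic$ of a projective complex variety with $\Pic$ of its analytification, giving the desired isomorphism $\Pic(X)\to\Pic(Y)$ in the algebraic sense. The only point that requires a little care, and which I expect to be the main obstacle, is controlling the coherent cohomology maps $H^i(X,\mathscr O_X)\to H^i(Y,\mathscr O_Y)$ that sit in the exponential ladder: one must verify that in the numerical range $n-e\geq 3$ these behave compatibly enough (isomorphism for $i=1$, and injectivity or the appropriate surjectivity at the boundary for $i=2$) for the five lemma to close. This is exactly the coherent analogue of the Sommese--Lazarsfeld statement, and the cleanest route is to cite the coherent part of the same Lefschetz package rather than reprove it, so that the corollary follows formally once both the integral and coherent restriction maps are known in the stated range.
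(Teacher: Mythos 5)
Your route is exactly the paper's (the paper states the corollary as an immediate consequence of Theorem \ref{laz}, the exponential sequences and GAGA, with no further detail), and you have correctly isolated the one non-formal point: for the five lemma to close on the ladder
$$H^1(\cdot,\mathbb Z)\to H^1(\mathscr O)\to \Pic(\cdot^{\an})\to H^2(\cdot,\mathbb Z)\to H^2(\mathscr O)$$
you need $H^1(\mathscr O_X)\to H^1(\mathscr O_Y)$ to be an isomorphism and $H^2(\mathscr O_X)\to H^2(\mathscr O_Y)$ to be injective, and these do not come out of Theorem \ref{laz} as stated.

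The way you propose to supply this step, however, does not quite work in the generality in which the corollary is used. The Kodaira--Nakano/Le Potier route goes through the Koszul complex of $s$, hence requires $s$ to be a regular section ($\dim Y=n-e$); but the paper explicitly invokes Theorem \ref{laz} and this corollary in the range $\dim Y\geq n-e$ (in Step 2 of Theorem \ref{mainscroll}, $Y=S_{n_1,\ldots,n_d}$ has dimension $d$ while $n-e=3$), so there is no coherent Lefschetz package to cite off the shelf there. The clean way to close the gap when $Y$ is smooth (as it is in the application) is Hodge theory: the restrictions $H^i(X,\mathbb Z)\to H^i(Y,\mathbb Z)$ for $i=1,2$, being pullbacks along a morphism of smooth projective varieties, are morphisms of pure Hodge structures, and since they are isomorphisms they induce isomorphisms on the $(0,i)$-pieces, i.e. on $H^i(\mathscr O_X)\to H^i(\mathscr O_Y)$; this gives precisely the two missing hypotheses of the five lemma. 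For singular $Y$ (which the hypotheses of Theorem \ref{laz} do not exclude) this argument is unavailable and the corollary genuinely requires more care, so you should either add a smoothness hypothesis on $Y$ or supply a separate argument for the coherent maps in that case.
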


\begin{rem*} Under the extra-hypothesis that $\dim(Y)=n-e$, Theorem \ref{laz} was proved by Sommese
in \cite{So}. Actually, Lazarsfeld observed in \cite{La} that essentially the same proof of Sommese also yields the general case when $\dim(Y)\geq n-e$ (and in the last section we are going to use this result exactly under this more general assumption). On the other hand, Newstead proved various Lefschetz type results (for homotopy groups, and singular homology and cohomology groups) in the case where $E$ is a direct sum of line bundles of the form $\mathscr O_X(m)$, with $m>0$, see \cite{N1} and \cite{N2}.\end{rem*}

The next  theorem (which follows easily from some results of Lyubeznik \cite{Ly}) takes care of the case when the characteristic of $k$ is arbitrary. 

\begin{theorem}\label{etale} Assume that $p>0$ and let $Y$ be a nonsingular closed subvariety of $\mathbb P^N$ which is set theoretically given by $s$ equations, with $s\leq N-3$. Then the restriction map
$\alpha\colon\Pic(\mathbb P^N)\to\Pic(Y)$ is injective and $\Coker(\alpha)$ is a finite torsion $p$-group.\end{theorem}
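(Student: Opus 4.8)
The plan is to run the \'etale analog of the argument that proves Corollary \ref{laz1}, replacing the exponential sequence by the Kummer sequence. Fix a prime $\ell\neq p$; since $k$ is algebraically closed we identify $\mu_\ell\cong\mathbb Z/\ell$. The starting point is a Lefschetz-type comparison in \'etale cohomology, which is the content borrowed from Lyubeznik \cite{Ly}: because $Y$ is defined set-theoretically by $s$ equations, the complement $U:=\mathbb P^N\setminus Y$ is covered by $s$ affine open subsets, so Artin vanishing together with the Mayer--Vietoris spectral sequence gives $\cd_\ell(U)\le N+s-1$. Feeding this bound into the long exact sequence of the pair $(\mathbb P^N,U)$, using cohomological purity for the nonsingular $Y$ (which rewrites $H^i_Y(\mathbb P^N,\mu_\ell)$ as a Tate twist of $H^{i-2c}(Y,\mu_\ell)$, where $c=\codim_{\mathbb P^N}(Y)$) together with Poincar\'e duality on $Y$, one obtains that the restriction map $H^i(\mathbb P^N,\mu_\ell)\to H^i(Y,\mu_\ell)$ is an isomorphism for $i\le N-s-1$ and injective for $i=N-s$. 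This is exactly the \'etale counterpart of Theorem \ref{laz}. Since $s\le N-3$ we have $N-s-1\ge 2$, so the restriction is an isomorphism in degrees $i=0,1,2$.

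Next I would feed these isomorphisms into the Kummer sequences for $\mathbb P^N$ and for $Y$. For $\mathbb P^N$ one has $H^1(\mathbb P^N,\mu_\ell)=0$ and $H^2(\mathbb P^N,\mu_\ell)\cong\Pic(\mathbb P^N)/\ell\cong\mathbb Z/\ell$ (the Brauer group being trivial). Because $Y$ is projective, connected and reduced, $\mathscr O_Y^*(Y)=k^*$ is $\ell$-divisible, so the Kummer sequence for $Y$ gives $H^1(Y,\mu_\ell)\cong\Pic(Y)[\ell]$ and a short exact sequence $0\to\Pic(Y)/\ell\to H^2(Y,\mu_\ell)\to\operatorname{Br}(Y)[\ell]\to 0$. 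The degree-$1$ isomorphism yields $\Pic(Y)[\ell]=H^1(Y,\mu_\ell)\cong H^1(\mathbb P^N,\mu_\ell)=0$, so $\Pic(Y)$ has no $\ell$-torsion for every $\ell\neq p$. The degree-$2$ isomorphism, combined with the triviality of $\operatorname{Br}(\mathbb P^N)$ and the commutativity of the Kummer sequences with restriction, shows that the induced map $\bar\alpha\colon\Pic(\mathbb P^N)/\ell\to\Pic(Y)/\ell$ is an isomorphism (both sides being $\mathbb Z/\ell$).

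The injectivity of $\alpha$ itself is elementary: $\Pic(\mathbb P^N)=\mathbb Z[\mathscr O(1)]$ and $\alpha(\mathscr O(m))=\mathscr O_Y(m)$ has degree $m\cdot\deg Y$ against $\mathscr O_Y(1)$, hence is trivial only for $m=0$. It remains to identify $\Coker(\alpha)$. First, $\Pic(Y)[\ell]=0$ for all $\ell\neq p$ forces the abelian variety $(\underline{\Pic}_Y^0)_{\mathrm{red}}$ to have trivial $\ell$-torsion, hence to be zero; therefore $\Pic(Y)=\NS(Y)$ is finitely generated, and so is $\Coker(\alpha)$. Tensoring the short exact sequence $0\to\Pic(\mathbb P^N)\xrightarrow{\alpha}\Pic(Y)\to\Coker(\alpha)\to 0$ with $\mathbb Z/\ell$ and using $\Pic(Y)[\ell]=0$ together with the isomorphism $\bar\alpha$ gives $\Coker(\alpha)[\ell]=0$ and $\Coker(\alpha)/\ell=0$ for every $\ell\neq p$. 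A finitely generated abelian group that is $\ell$-divisible for some $\ell$ has rank $0$, so $\Coker(\alpha)$ is finite, and the vanishing of its prime-to-$p$ torsion shows it is a $p$-group, as required.

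I expect the main obstacle to be the first paragraph: deriving the precise Lefschetz range $i\le N-s-1$ for the \'etale restriction map from the cohomological-dimension bound on $U$, in particular checking that the abstract isomorphisms produced by purity and duality are genuinely the restriction maps (compatibility of the Gysin morphism with cup product by the fundamental class) and handling the Tate twists uniformly over all $\ell\neq p$. The bookkeeping in the Kummer step and the passage from the vanishing of prime-to-$p$ torsion to $\Pic^0(Y)=0$ are comparatively routine, though the latter requires some care in characteristic $p$, where $\underline{\Pic}_Y^0$ may be non-reduced.
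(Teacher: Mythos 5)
Your proposal is correct and follows the same route as the paper: the decisive input in both is the bound $\cd(\mathbb P^N\setminus Y)\le N+s-1\le 2N-4$ on the \'etale cohomological dimension of the complement, obtained from Artin vanishing for affines plus Mayer--Vietoris applied to the $s$-fold affine cover of $\mathbb P^N\setminus Y$. The only difference is that at that point the paper simply invokes Lemma 11.1 of Lyubeznik \cite{Ly} to extract the conclusion, whereas you reprove that lemma --- soundly, via the Lefschetz comparison for $H^i(-,\mu_\ell)$ in degrees $i\le N-s-1\ge 2$, the Kummer sequences, and the theorem of the base to pass from the vanishing of prime-to-$p$ torsion and of $\Pic(Y)/\ell$ to the finiteness of the cokernel --- so your write-up is a self-contained version of the same argument rather than a different one.
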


\proof It is a basic fact that the \'etale cohomological dimension of an affine variety $U$ is $\leq\dim(U)$
(see e.g. Milne \cite{Mil}, Theorem 15.1; this result is in fact an \'etale analogue of a classical topological result of Andreotti and Frankel, see \cite{AF}, cf. also Milnor \cite{Miln}, page ?). If instead $U$ is covered by $s$ open affine subsets, then this result plus repeated application of Mayer-Vietoris (see \cite{Mil}, Theorem 10.8) yield the fact that the \'etale cohomological dimension of $U$ is $\leq\dim(U)+s-1$. Applying this to $U:=\mathbb P^N\setminus Y$ (which by hypothesis is covered by $s$ affines, namely the complements of the surfaces of the $s$ equations defining $Y$ set-theoretically)
we get that the \'etale cohomological dimension of $\mathbb P^N\setminus Y$ is $\leq N+s-1\leq 2N-4$.
At this point we can apply Lemma 11.1 in \cite{Ly} to get the conclusion.\qed

\begin{rem*} A Lefschetz type result similar to Theorem \ref{etale}, but for fundamental group has been proved by Cutkovsky in \cite{C}.\end{rem*}

\section{Necessary conditions for  set-theoretic complete \\ intersections}\label{second}\addtocounter{subsection}{1}

We start with the following result:  

\begin{prop}[cf. \cite{B},  p. 115] \label{tors} Let $Y$ be a closed subvariety of the projective irreducible variety $X$ over $k$, and assume that $p=0$. Then for every formal line bundle $\mathscr L\in\Pic(X_{/Y})$
such that $\mathscr L|Y\cong M^{\otimes s}$, with $s\geq 2$ an integer and
$M\in\Pic(Y)$, there exists a formal line bundle
$\mathscr M\in\Pic(X_{/Y})$ such that $\mathscr L\cong\mathscr M^{\otimes s}$
and $\mathscr M|Y\cong M$. The same statement holds if $p>0$, provided that $s$ is prime to $p$.
\end{prop}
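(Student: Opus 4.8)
The plan is to reduce the statement to a lifting problem over the infinitesimal neighbourhoods of $Y$ in $X$ and to solve it by induction. Write $\mathscr I\subset\mathscr O_X$ for the ideal sheaf of $Y$, and for $n\geq 0$ let $Y_n$ be the closed subscheme defined by $\mathscr I^{n+1}$, so that $Y_0=Y$ and the $Y_n$ form the inverse system whose limit is the formal scheme $X_{/Y}$. Recall that a formal line bundle on $X_{/Y}$ is the same as a compatible system of line bundles on the $Y_n$, i.e. $\Pic(X_{/Y})\cong\varprojlim_n\Pic(Y_n)$. Writing $L_n:=\mathscr L|Y_n$, I would construct inductively line bundles $M_n\in\Pic(Y_n)$ together with isomorphisms $\varphi_n\colon M_n^{\otimes s}\xrightarrow{\ \sim\ }L_n$ such that $M_{n+1}|Y_n\cong M_n$ and $\varphi_{n+1}|Y_n=\varphi_n$, starting from $M_0:=M$ and the given isomorphism $M^{\otimes s}\cong L_0$. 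The resulting system $(M_n)$ then defines the desired $\mathscr M\in\Pic(X_{/Y})$, and the compatible $\varphi_n$ glue to an isomorphism $\mathscr M^{\otimes s}\cong\mathscr L$ with $\mathscr M|Y\cong M$.

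The inductive step would rest on the truncated exponential sequence. The closed immersion $Y_n\hookrightarrow Y_{n+1}$ is defined by the ideal $J_n:=\mathscr I^{n+1}/\mathscr I^{n+2}$, which is a coherent $\mathscr O_Y$-module satisfying $J_n^2=0$ in $\mathscr O_{Y_{n+1}}$. Hence $a\mapsto 1+a$ defines a homomorphism $J_n\to\mathscr O_{Y_{n+1}}^{*}$ and an exact sequence of sheaves of abelian groups
$$0\longrightarrow J_n\longrightarrow\mathscr O_{Y_{n+1}}^{*}\longrightarrow\mathscr O_{Y_n}^{*}\longrightarrow 0,$$
whose cohomology gives $H^1(Y,J_n)\to\Pic(Y_{n+1})\to\Pic(Y_n)\xrightarrow{\delta}H^2(Y,J_n)$. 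The crucial point is that, because $J_n^2=0$, one has $(1+a)^s=1+sa$, so the $s$-th power map is \emph{multiplication by $s$} on the kernel $J_n$; consequently it induces multiplication by $s$ both on the group $H^1(Y,J_n)$ acting on the set of lifts of a line bundle and on the target $H^2(Y,J_n)$ of $\delta$. Since $J_n$ is coherent on the $k$-scheme $Y$, the groups $H^i(Y,J_n)$ are $k$-vector spaces, and multiplication by $s$ on them is the scalar $s\cdot 1_k$, which is invertible precisely when $p=0$, or when $p>0$ and $s$ is prime to $p$. This is exactly where the hypothesis enters.

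For the inductive step itself, suppose $M_n$ and $\varphi_n$ are given. Because $L_n=M_n^{\otimes s}$ admits the lift $L_{n+1}$, we have $\delta(L_n)=0$; but $\delta$ is a homomorphism, so $s\,\delta(M_n)=\delta(M_n^{\otimes s})=\delta(L_n)=0$ in $H^2(Y,J_n)$, and invertibility of $s$ forces $\delta(M_n)=0$. Thus $M_n$ admits some lift $\widetilde M_{n+1}\in\Pic(Y_{n+1})$. Then $\widetilde M_{n+1}^{\otimes s}$ and $L_{n+1}$ are two lifts of $L_n$, hence they differ by a class $c\in H^1(Y,J_n)$. Twisting $\widetilde M_{n+1}$ by a class $b\in H^1(Y,J_n)$ changes $\widetilde M_{n+1}^{\otimes s}$ by $s\,b$ (again by $(1+a)^s=1+sa$); solving $s\,b=-c$, possible since $s$ is invertible on $H^1(Y,J_n)$, yields a lift $M_{n+1}$ whose $s$-th power is isomorphic to $L_{n+1}$, and I would take $\varphi_{n+1}$ to be this isomorphism, arranged to restrict to $\varphi_n$. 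Passing to the inverse limit then completes the construction. The conceptual heart of the argument---and the only place where genuine care is needed---is the identification of the $s$-th power map with scalar multiplication by $s$ on the cohomology of the square-zero ideals, together with the bookkeeping required to make the isomorphisms $\varphi_n$ compatible so that they glue in the limit; the remaining ingredients are the standard formal Grothendieck--Lefschetz machinery.
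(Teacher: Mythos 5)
Your proposal is correct and follows essentially the same route as the paper: induction over the infinitesimal neighbourhoods $Y(n)$, the truncated exponential sequence $0\to\mathscr I^{n+1}/\mathscr I^{n+2}\to\mathscr O_{Y(n+1)}^{*}\to\mathscr O_{Y(n)}^{*}\to 0$, and the fact that multiplication by $s$ is invertible on the $k$-vector spaces $H^{i}(Y,\mathscr I^{n+1}/\mathscr I^{n+2})$ (which is exactly the paper's ``no $s$-torsion in $H^2$'' and ``$s$-divisibility of $H^1$'' argument, phrased via the obstruction class and the torsor of lifts). The only difference is cosmetic: you track the isomorphisms $\varphi_n$ explicitly, a bookkeeping point the paper leaves implicit.
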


\begin{proof} Since this result is going to be used later on in an essential way, for the convenience of the reader we include the proof. 
For every $n\geq 0$ consider the infinitesimal neighbourhood $Y(n)=(Y,\mathscr O_X/\mathscr I^{n+1})$
of order $n$ of $Y$ in $X$. We have the inclusions of subschemes 
$$Y(0)\subset Y(1)\subset Y(2)\subset\cdots\subset X.$$
Then giving a formal line bundle $\mathscr L$ on $X_{/Y}$ amounts to giving a 
sequence $\{L_n\}_{n\geq 0}$, with $L_n\in\Pic(Y(n))$ such that $L_{n+1}|Y(n)\cong
L_n$ for every $n\geq 0$. The hypothesis says that $L_0\cong M^{\otimes s}$ for some $M$ in
$\Pic(Y(0))=\Pic(Y)$. We shall construct by induction a formal
line bundle $\mathscr M=\{M_n\}_{n\geq 0}$ in $\Pic(X_{/Y})$ with the
desired properties. Starting with $M_0=M$, the induction step is the
following:

\medskip

\noindent{\it Claim.} Assume that for a fixed integer $n\geq 0$
there exists $M_n\in\Pic(Y(n))$ such that $L_n\cong M^{\otimes s}_n$.
Then there exists $M_{n+1}\in\Pic(Y(n+1))$ such that
$L_{n+1}\cong M^{\otimes s}_{n+1}$ and $M_{n+1}|Y(n)\cong M_n$.

\medskip

Indeed, consider the exact sequence of cohomology
$$H^1(Y,\mathscr I^{n+1}/\mathscr I^{n+2})\to\Pic(Y(n+1))\to\Pic(Y(n))\to
H^2(Y,\mathscr I^{n+1}/\mathscr I^{n+2})$$
associated to the truncated exponential exact sequence
$$0\to\mathscr I^{n+1}/\mathscr I^{n+2}\to\mathscr O_{Y(n+1)}^*\to\mathscr O_{Y(n)}^*\to 0\;,
$$
where $\mathscr I$ is the sheaf of ideals of $Y$ in $X$. To prove the
claim, observe that in this cohomology sequence the extreme terms are vector
spaces over $k$; in particular $H^2(Y,\mathscr I^{n+1}/\mathscr I^{n+2})$ has
no torsion because $\car(k)=0$. Then the class of $M_n$ in
$$\Pic(Y(n))/\Im(\Pic(Y(n+1))\to\Pic(Y(n)))
\subseteq H^2(Y,\mathscr I^{n+1}/\mathscr I^{n+2})$$
is a torsion element of order dividing $s$.
Since $H^2(Y,\mathscr I^{n+1}/\mathscr I^{n+2})$ has no torsion we infer that
$M_n\in\Im(\Pic(Y(n+1))\to\Pic(Y(n)))$, i.e. there exists $N\in\Pic(Y(n+1))$
such that $N|Y(n)\cong M_n$. Now
$$(L_{n+1}\otimes N^{\otimes(-s)})|Y(n)\cong L_n\otimes M_n^{\otimes(-s)}\cong\mathscr O_{Y(n)}.$$
Therefore $L_{n+1}\otimes N^{\otimes(-s)}$ is a line bundle on $Y(n+1)$ coming
from the $k$-vector space $H^1(Y,\mathscr I^{n+1}/\mathscr I^{n+2})$. Since 
$\car(k)=0$ every element of such a
$k$-vector space is divisible by $s$, whence 
$$L_{n+1}\otimes N^{\otimes(-s)}\cong P^{\otimes s},\;\;\text{with}\;\;P\in\Pic(Y(n+1))\;\;
\text{such that}\;\; P|Y(n)\cong\mathscr O_{Y(n)}.$$
If we take $M_{n+1}=N\otimes P$ we get $L_{n+1}\cong M_{n+1}^{\otimes s}$ and
$M_{n+1}|Y(n)\cong M_n$, which proves the claim.

The last assertion of the proposition comes from the above argument plus
the observation that the $k$-vector space $H^2(Y,\mathscr I^{n+1}/\mathscr I^{n+2})$
over a field $k$ of characteristic $p>0$ has no $e$-torsion and every element 
of the $k$-vector space $H^1(Y,\mathscr I^{n+1}/\mathscr I^{n+2})$ is (uniquely) divisible by
$e$, for every $e>0$ prime to $p$.\qed\end{proof}

\medskip

Here are two corollaries of Proposition \ref{tors}:

\begin{corollary}\label{tors'} Under the notation and hypotheses of Proposition $\ref{tors}$, the abelian group $\Coker(\Pic(X_{/Y})\to\Pic(Y))$ is torsion-free if $p=0$, and has no $e$-torsion for every positive integer $e$ which is prime to 
$p$, if $p>0$.\end{corollary}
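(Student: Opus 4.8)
The plan is to deduce Corollary \ref{tors'} directly from Proposition \ref{tors} by unpacking what it means for the cokernel of the restriction map $\grr\colon\Pic(X_{/Y})\to\Pic(Y)$ to have torsion. Write $C:=\Coker(\grr)$, and recall that a torsion element of $C$ is the class of some $M\in\Pic(Y)$ whose order in $C$ is a finite integer $s\geq 1$. I would first reduce to the case where this order is exactly some $s\geq 2$ (resp. some $s\geq 2$ prime to $p$ when $p>0$), since elements of order $1$ are by definition already in the image of $\grr$ and contribute nothing.

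Next I would translate the condition ``$M$ has order $s$ in $C$'' into the hypothesis of Proposition \ref{tors}. By definition of the cokernel, the class of $M$ being $s$-torsion means that $M^{\otimes s}$ lies in the image of $\grr$, i.e. there exists a formal line bundle $\mathscr L\in\Pic(X_{/Y})$ with $\grr(\mathscr L)=\mathscr L|Y\cong M^{\otimes s}$. This is \emph{exactly} the input to Proposition \ref{tors}. Applying that proposition, I obtain a formal line bundle $\mathscr M\in\Pic(X_{/Y})$ with $\mathscr L\cong\mathscr M^{\otimes s}$ and $\mathscr M|Y\cong M$. The crucial consequence is that $M=\grr(\mathscr M)$ already lies in the image of $\grr$, so the class of $M$ in $C$ is trivial.

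The conclusion then follows by contraposition: I have shown that any $M\in\Pic(Y)$ whose class is $s$-torsion in $C$ (with $s\geq 2$, and $s$ prime to $p$ in positive characteristic) must in fact have trivial class. Hence $C$ has no nontrivial $s$-torsion for any such $s$. When $p=0$ this holds for every $s\geq 2$, which says precisely that $C$ is torsion-free. When $p>0$ it holds for every $s$ (equivalently every $e>0$) prime to $p$, which is the asserted statement that $C$ has no $e$-torsion for $e$ prime to $p$.

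I do not anticipate a genuine obstacle here, since the corollary is essentially a reformulation of Proposition \ref{tors} in the language of the cokernel; the only point requiring mild care is the bookkeeping between ``order exactly $s$'' and ``annihilated by $s$,'' and ensuring the coprimality-to-$p$ hypothesis is carried correctly through to each $e$-torsion class in the positive-characteristic case. One should note that the divisibility argument applies to any integer dividing the order, so it suffices to run the argument for $M$ annihilated by an arbitrary $s\geq 2$ of the allowed type rather than insisting the order be exactly $s$.
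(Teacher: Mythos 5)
Your argument is correct and is exactly the intended deduction: the paper gives no separate proof of Corollary \ref{tors'}, presenting it as an immediate consequence of Proposition \ref{tors}, and your unwinding (an $s$-torsion class in the cokernel means $M^{\otimes s}$ extends formally, whence by the proposition $M$ itself extends and its class is trivial) is precisely that reformulation. The bookkeeping about ``annihilated by $s$'' versus ``order exactly $s$'' is handled correctly, so there is nothing to add.
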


If $A$ is an abelian (multiplicative) group with neutral element $e$,  we shall denote by $\Tors(A)$ the torsion subgroup of $A$. Let $p\geq 2$ be a prime integer. Then we also set
$$\Tors^p(A):=\{a\in A\ |\ \exists s>0\;\text{such that $s$ is prime to $p$ and}\;a^s=e\}.$$
Clearly $\Tors^p(A)$ is a subgroup of $A$. Then we also have:

\begin{corollary}\label{tors"} Under the notation and hypotheses of Proposition $\ref{tors}$ $($with $p=\car(k))$, assume furthermore that $X$ is nonsingular and $\Leff(X,Y)$ holds. Then:
\begin{enumerate} \item[\em i)] The abelian group $\Coker(\Pic(X)\to\Pic(Y))$ is torsion-free if $p=0$, and has no $s$--torsion for every positive integer $s$ which is prime to $p$ if $p>0$. If in addition  the restriction map $\alpha\colon\Pic(X)\to\Pic(Y)$ is injective $($this is always the case if $X=\mathbb P^n$ and $\dim(Y)>0)$, then $\alpha$ induces an isomorphism $\Tors(\Pic(X))\cong\Tors(\Pic(Y))$ if $p=0$, and an isomorphism $\Tors^p(\Pic(X))\cong\Tors^p(\Pic(Y))$ if $p>0$.
\item[\em ii)] Assume in addition that $Y$ meets every hypersurface of $X$. Let $L$ be a line bundle on $X$ such that  $L|Y\cong M^{\otimes s}$ for some $M\in\Pic(Y)$ and $s\geq 2$ prime to $p$, if $p>0$. Then there exists a line bundle $M'\in\Pic(X)$ such that $M'|Y\cong M$ and $L\cong {M'}^{\otimes s}$.\end{enumerate}
\end{corollary}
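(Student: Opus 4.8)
The plan is to compare $\Pic(Y)$ with the Picard group of the formal completion $\Pic(X_{/Y})$ — for which Corollary \ref{tors'} already records the desired torsion information — and then to transport everything to $\Pic(X)$ by analyzing the completion homomorphism $c\colon\Pic(X)\to\Pic(X_{/Y})$. The first task is to show that $\Leff(X,Y)$ together with the nonsingularity of $X$ forces $c$ to be \emph{surjective}. For this I would first observe that $\Lef(X,Y)$, applied to line bundles, yields an isomorphism $\varinjlim_{U\supseteq Y}\Pic(U)\xrightarrow{\sim}\Pic(X_{/Y})$, the limit running over open $U\subseteq X$ containing $Y$: surjectivity is precisely the effectivity clause in $\Leff(X,Y)$, while injectivity follows because a formal trivialization of $\h L$ lifts, by $\Lef(X,Y)$, to a section $s\in H^0(U,L)$ nonvanishing along $Y$, hence nonvanishing on a smaller open $U'$ with $Y\subseteq U'\subseteq U$, so that $L$ becomes trivial after shrinking. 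Since $X$ is nonsingular, the restriction $\Pic(X)\to\Pic(U)$ is surjective for every such $U$ (Weil divisors on $U$ are restrictions of their closures), and passing to the limit shows $c$ is surjective.

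Granting surjectivity of $c$, part i) follows quickly. The restriction $\alpha$ factors as $\Pic(X)\xrightarrow{c}\Pic(X_{/Y})\to\Pic(Y)$, and since $c$ is surjective the two images in $\Pic(Y)$ coincide; hence $\Coker(\Pic(X)\to\Pic(Y))\cong\Coker(\Pic(X_{/Y})\to\Pic(Y))$, and the torsion-freeness (resp. absence of $s$-torsion for $s$ prime to $p$) is exactly Corollary \ref{tors'}. For the statement on torsion subgroups, assume $\alpha$ injective. Injectivity of $\alpha|_{\Tors}$ is immediate; for surjectivity, take $t\in\Tors(\Pic(Y))$ (resp. $t\in\Tors^p(\Pic(Y))$) with $t^s=e$. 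Its class in $\Coker(\alpha)$ is killed by $s$, hence trivial by the cokernel result, so $t=\alpha(L)$ for some $L\in\Pic(X)$; then $\alpha(L^s)=t^s=e$ forces $L^s=e$ by injectivity, so $L\in\Tors(\Pic(X))$ (resp. $\Tors^p(\Pic(X))$). The parenthetical remark is seen from $\Pic(\mathbb P^n)=\mathbb Z[\mathscr O(1)]$ and the fact that $\mathscr O_Y(1)$, being ample on a positive-dimensional $Y$, is non-torsion.

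For part ii) the extra hypothesis that $Y$ meets every hypersurface of $X$ is used precisely to upgrade $c$ from surjective to \emph{injective}. If $N\in\Pic(X)$ has $\h N\cong\mathscr O_{X_{/Y}}$, then by $\Lef(X,Y)$ (with $U=X$) the formal trivialization comes from a section $s\in H^0(X,N)$; as $X$ is nonsingular, the zero locus $Z(s)$ is an effective divisor, and it is disjoint from $Y$ because $s$ is nonvanishing along $Y$. The hypothesis then forces $Z(s)=\emptyset$, whence $N\cong\mathscr O_X$. With $c$ bijective the claim is a direct translation of Proposition \ref{tors}: complete $L$ to $\h L$, note $\h L|Y\cong L|Y\cong M^{\otimes s}$, and apply Proposition \ref{tors} to get $\mathscr M\in\Pic(X_{/Y})$ with $\mathscr M^{\otimes s}\cong\h L$ and $\mathscr M|Y\cong M$. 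By surjectivity of $c$ choose $M'\in\Pic(X)$ with $\h{M'}\cong\mathscr M$; then $\widehat{{M'}^{\otimes s}}\cong\h L$, so injectivity of $c$ gives ${M'}^{\otimes s}\cong L$, while $M'|Y\cong\mathscr M|Y\cong M$.

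I expect the main obstacle to be the careful verification that $c$ is surjective — the interplay between the formal-to-open correspondence supplied by $\Leff(X,Y)$ and the extension of line bundles from open subsets to the nonsingular $X$ — and, for part ii), the point that the hypersurface hypothesis is exactly what promotes injectivity in the direct limit $\varinjlim_U\Pic(U)$ to genuine injectivity of $c$ on $\Pic(X)$, by killing the zero divisor of $s$ rather than merely allowing one to shrink $U$.
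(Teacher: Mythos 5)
Your proposal is correct and follows essentially the same route as the paper: factor the restriction map through $\Pic(X_{/Y})$, get surjectivity of $\Pic(X)\to\Pic(X_{/Y})$ from $\Leff(X,Y)$ plus extension of line bundles over the nonsingular $X$, invoke Corollary \ref{tors'} and Proposition \ref{tors}, and use the hypersurface hypothesis for the uniqueness needed in ii). The only (cosmetic) difference is that in ii) you prove injectivity of the completion map directly via a global nonvanishing section supplied by $\Lef(X,Y)$, whereas the paper phrases the same mechanism as the isomorphism $\Pic(X)\cong\Pic(U)$ for every open $U\supseteq Y$.
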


\proof  i) The canonical restriction map $\Pic(X)\to\Pic(Y)$ factors as $\Pic(X)\to\Pic(X_{/Y})\to\Pic(Y)$. By Corollary \ref{tors'} it is enough to show that the map $\Pic(X)\to\Pic(X_{/Y})$ is surjective. To check this, let $\mathscr L\in\Pic(X_{/Y})$ be an arbitrary formal line bundle. By $\Leff(X,Y)$, there exists an open subset $U$ of $X$ containing $Y$ and a line bundle $L'$ on $U$ such that $\hat{L'}\cong\mathscr L$. Since $X$ is nonsingular, $L'$ extends to a line bundle $L\in\Pic(X)$. Then clearly $\hat L=\hat{L'}\cong\mathscr L$, which yields i).

To prove ii) observe that the hypotheses that $X$ is nonsingular and $Y$ meets every hypersurface of $X$ implies that the restriction map $\Pic(X)\to\Pic(U)$ is an isomorphism for every open subset $U$ containing $Y$. Then ii) follows immediately from Proposition \ref{tors}. Indeed, by Proposition \ref{tors} and the fact that $\Leff(X,Y)$ holds we infer that there exists a formal line bundle $\mathscr M\in\Pic(X_{/Y})$ such that $\mathscr  M|Y\cong M$ and the formal completion $\hat L$ is isomorphic to  $\mathscr M^{\otimes s}$. By $\Leff(X,Y)$ again we find an open neighbourhood $U$ of $Y$ in $X$  and a line bundle $M^{\prime\prime}\in\Pic(U)$ such that $L|U\cong {M^{\prime\prime}}^{\otimes s}$ and
the formal completion $\hat{M^{\prime\prime}}$ is isomorphic to $\mathscr M$ (in particular, $M^{\prime\prime}|Y\cong M$ and $L|U\cong {M^{\prime\prime}}^{\otimes s}$). Finally, since the restriction map $\Pic(X)\to\Pic(U)$ is an isomorphism, we can (uniquely) extend $M^{\prime\prime}$ to a line bundle $M'\in\Pic(X)$ with the desired properties.\qed 

\begin{rem}\label{00} {\em The hypothesis that $X$ is nonsingular is essential in Corollary \ref{tors"}. Indeed, fix $r\geq 2$ and $s\geq 2$, and take $X\subset\mathbb P^{\binom{r+s}{s}}$ the projective cone over the polarized variety $(\mathbb P^r,\mathscr O_{\mathbb P^r}(s))$, e.g. the projective cone over the Veronese embedding $\mathbb P^r\hookrightarrow\mathbb P^{\binom{r+s}{s}-1}$. Take $Y$ the intersection of $X$ with the hyperplane at infinity. Then $Y\cong\mathbb P^r$, and since $Y$ is a hyperplane section of $X$ of dimension $r\geq 2$, by Grothendieck's result (Theorem \ref{gro} above) the effective Grothendieck-Lefschetz condition $\Leff(X,Y)$ holds. In this case $\Coker(\Pic(X)\to\Pic(Y))\cong \mathbb Z/s\mathbb Z$, and in particular, $\Coker(\Pic(X)\to\Pic(Y))$ has torsion if $p=0$. However, if $U:=X\setminus\{p\}$, with $p$ the vertex of the cone $X$, then $U$ is nonsingular and  the restriction map $\Pic(U)\to\Pic(Y)$ is an isomorphism. }\end{rem}

\begin{rems}\label{01} {\em \begin{enumerate}
\item[i)] The earliest reference we are aware of, regarding the torsion-freeness of the cokernel of
some natural restriction maps between singular cohomogy groups, is \cite{AF}. Specifically, let
$X$ be an $n$-dimensional  nonsingular subvariety of the complex projective space $\mathbb P^N(\mathbb C)$, and let $Y$ the proper intersection of $X$ with an hyperplane $H$ of $\mathbb P^N(\mathbb C)$. Then part of the famous topological theorem on hyperplane sections asserts that if $n\geq 2$ then the canonical restriction map $H^{n-1}(X,\mathbb Z)\to H^{n-1}(Y,\mathbb Z)$ is injective and its cokernel is torsion-free.
\item[ii)] Let $Y$ be a nonsingular (scheme-theoretic) complete intersection surface of $\mathbb P^N$ over a field $k$ of characteristic zero. Then  Robbiano proved in \cite{Ro} a criterion for a curve $C$ lying on $Y$ to be the scheme-theoretic intersection of $Y$ with a hyperplane $H$ of $\mathbb P^n$. In order to do that he used in an essential way the fact that the cokernel of the canonical map $\Pic(\mathbb P^N)\to\Pic(Y)$ is torsion-free. \end{enumerate}}\end{rems}

\begin{lemma}\label{leffc} Let $Y$ be a closed irreducible subvariety of $\mathbb P^n$ of dimension
$d\geq 2$. If $Y$ is a set-theoretic complete intersection in $\mathbb P^n$ then the effective Grothendieck-Lefschetz condition $\Leff(\mathbb P^n,Y)$ holds.\end{lemma}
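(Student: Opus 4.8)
The plan is to reduce the statement to Theorem \ref{gro} by means of a Veronese embedding, exploiting the fact that the effective Grothendieck--Lefschetz condition depends only on the \emph{support} of the subscheme along which one completes. Concretely, I would first observe that $\Leff(\mathbb P^n,Y)$ is insensitive to the scheme structure along $Y$: the formal completion $(\mathbb P^n)_{/Y}$, the open subsets $U$ containing $Y$, and the (formal) vector bundles appearing in Definition \ref{(2.2.1)} all depend only on the closed set $|Y|$, since two subschemes with the same support have cofinal adic topologies and hence canonically isomorphic formal completions. Thus it suffices to exhibit \emph{any} closed subscheme of $\mathbb P^n$ with support $Y$ for which $\Leff$ holds.

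Write $c:=\codim_{\mathbb P^n}(Y)=n-d\geq 1$ and choose, by hypothesis, homogeneous polynomials $f_1,\ldots,f_c$ of degrees $d_1,\ldots,d_c$ with $\mathscr V_+(f_1,\ldots,f_c)=Y$. Setting $m:=\lcm(d_1,\ldots,d_c)$ and $g_i:=f_i^{\,m/d_i}$, all the $g_i$ have the common degree $m$ and still satisfy $\mathscr V_+(g_1,\ldots,g_c)=Y$. I would then pass to the $m$-th Veronese embedding $v_m\colon\mathbb P^n\hookrightarrow\mathbb P^M$, with $M=\binom{n+m}{m}-1$, and put $X:=v_m(\mathbb P^n)$, a nonsingular subvariety of $\mathbb P^M$. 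Each $g_i$ is the pullback of a linear form $\ell_i$ on $\mathbb P^M$; writing $H_i:=\mathscr V_+(\ell_i)$ one has $X\cap H_i=v_m(\mathscr V_+(g_i))$ set-theoretically, and therefore $X\cap H_1\cap\cdots\cap H_c=v_m(Y)$.

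Next I would verify that the scheme-theoretic intersection $W:=X\cap H_1\cap\cdots\cap H_c$ is genuinely a complete intersection of $X$ with the $c$ hyperplanes $H_1,\ldots,H_c$. Its support $v_m(Y)$ has dimension $d=n-c$, i.e. exactly the expected codimension, so $\ell_1|X,\ldots,\ell_c|X$ cut out a subscheme of pure expected codimension in the smooth (hence Cohen--Macaulay) variety $X$; consequently they form a regular sequence and $W$ is the desired complete intersection subscheme, with $\dim W=n-c=d\geq 2$. Since $X$ is nonsingular everywhere, the nonsingularity hypothesis of Theorem \ref{gro} is automatic, and that theorem yields $\Leff(X,W)$. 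Finally, because $v_m$ is an isomorphism onto $X$ it carries the pair $(X,W)$ to the pair $\bigl(\mathbb P^n,\mathscr V_+(g_1,\ldots,g_c)\bigr)$; as $\Leff$ is intrinsic to the pair, $\Leff\bigl(\mathbb P^n,\mathscr V_+(g_1,\ldots,g_c)\bigr)$ holds, and since $\mathscr V_+(g_1,\ldots,g_c)$ has support $Y$, the reduction of the first paragraph gives $\Leff(\mathbb P^n,Y)$.

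The part requiring the most care is the bookkeeping around scheme structures: one must confirm both that $\Leff$ truly ignores nilpotents along $Y$ (so that $Y$, $\mathscr V_+(f_1,\ldots,f_c)$ and $\mathscr V_+(g_1,\ldots,g_c)$ are interchangeable) and that intersecting $X$ with the $H_i$ produces a complete intersection of the expected dimension rather than an excess intersection. Both points hinge on the single dimension count $\dim v_m(Y)=d=n-c$; once this is in place, the regular-sequence property follows from the smoothness of $X$, and no genuine difficulty remains.
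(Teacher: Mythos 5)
Your proposal is correct and follows essentially the same strategy as the paper's proof: replace $Y$ by the scheme-theoretic complete intersection subscheme cut out by the $c=n-d$ equations, apply Theorem \ref{gro} to it, and conclude via the observation that $\Leff(\mathbb P^n,\cdot)$ depends only on the support (since $Y'_{\red}=Y$ forces $\mathbb P^n_{/Y'}=\mathbb P^n_{/Y}$). The only difference is that you spell out the Veronese reduction needed to turn hypersurfaces of higher degree into hyperplane sections so that Theorem \ref{gro} applies literally --- a detail the paper's direct citation leaves implicit --- but this is an elaboration of the same argument, not a different route.
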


\proof Let  $f_1,...,f_r\in k[T_0,T_1,...,T_{n}]$ be homogeneous polynomials defining $Y$ in $\mathbb P^{n}$ as a set-theoretic complete intersection in $\mathbb P^n$, where $r=n-d$. Then we have $\;\root\of{(f_1,\ldots,f_r)}=\mathscr I_+(Y)$. If $Y'$ is
the subscheme of $\mathbb P^{n}$ defined by the ideal 
 $(f_1,...,f_{r})$ then $Y'$ is a scheme-theoretic complete
intersection of $\mathbb P^{n}$ of dimension $\geq 2$, and hence
by Theorem \ref{gro}, $\Leff(\mathbb P^{n},Y')$ holds. Since
$Y'_{\red}=Y$, we infer that $\mathbb P^n_{/Y}=\mathbb P^n_{/Y'}$, and in particular,
$\Leff(\mathbb P^{n},Y)$ also holds. \qed

\medskip

Now we are ready to  prove the main result of this section.

\begin{theorem}\label{main} Let $Y$ be a closed irreducible
subvariety of $\mathbb P^n$ of dimension $\geq 2$ over an algebraically closed field $k$
of characteristic $p\geq 0$. 
\begin{enumerate}\item[\em i)] If $Y$ is a set-theoretic complete intersection in $\mathbb P^n$ then $Y$
is algebraically simply connected, i.e. there are no non-trivial connected \'etale covers of $Y$.
\item[\em ii)]
Assume  that $p=0$ and  $Y$ normal. If  $H^1(\mathscr O_Y)\neq 0$, then $Y$ is not a set-theoretic complete intersection in $\mathbb P^n$.
\item[\em iii)] Assume  that $p>0$ and  $Y$ is normal. If $H^1(\mathscr O_Y)\neq 0$ and the Picard scheme $\underline{\Pic}_Y^0$ of $Y$ is reduced, then $Y$ is not a set-theoretic complete intersection in $\mathbb P^n$. $($If for example $H^2(\mathscr O_Y)=0$, then  $\underline{\Pic}_Y^0$ is always reduced, see $\cite{[FGA]}$, \'Expos\'e $236$, Proposition
$2.10$, $ii))$.
\item[\em iv)]  Assume that $Y$ is a set-theoretic complete intersection in $\mathbb P^n$. Then the restriction map $\alpha\colon\Pic(\mathbb P^n)\to\Pic(Y)$ is injective and $\Coker(\alpha)$ is torsion-free if $p=0$, and has no $s$-torsion for every integer $s>0$ which is prime to $p$, if $p>0$. 
\item[\em v)] Assume that there exists a line bundle $L$ on $Y$ and an integer $s\geq 2$ such that 
$\mathscr O_Y(1)\cong L^{\otimes s}$. If $p>0$ assume moreover that $s$ is prime to $p$. Then $Y$ is not a set-theoretic complete intersection in $\mathbb P^n$.
\item[\em vi)] Assume that $Y$ is a set-theoretic complete intersection of dimension $\geq 3$. If $p=0$ then the restriction map $\Pic(\mathbb P^n)\to\Pic(Y)$ is an isomorphism. If $p>0$ and $Y$ is nonsingular, then $\Pic(Y)/\mathbb Z[\mathscr O_Y(1)]$ is a finite $p$-group $($and in particular, $\rank\Pic(Y)=1)$.
\end{enumerate}
\end{theorem}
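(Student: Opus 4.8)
The plan is to read off all six assertions from the apparatus already assembled, the common first move being the following: if $Y$ is a set-theoretic complete intersection of dimension $\geq 2$, then Lemma~\ref{leffc} gives $\Leff(\mathbb P^n,Y)$, so Corollary~\ref{tors"} is available with $X=\mathbb P^n$, which is nonsingular and is met by $Y$ along every hypersurface since $\dim Y>0$. Granting this, part i) is immediate: $\Leff(\mathbb P^n,Y)$ together with Theorem~\ref{gro1} yields that $Y$ is algebraically simply connected. Part iv) is then precisely Corollary~\ref{tors"} i) applied with $X=\mathbb P^n$ (the injectivity of $\alpha$ being the parenthetical case recorded there, $X=\mathbb P^n$ and $\dim Y>0$, and the statement on $\Coker(\alpha)$ being its conclusion verbatim). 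For part v) I would argue by contradiction: assuming $Y$ is a set-theoretic complete intersection, I would invoke Corollary~\ref{tors"} ii) with the line bundle $\mathscr O_{\mathbb P^n}(1)$ on $\mathbb P^n$ and with the given $L$ on $Y$ playing the role of $M$; since $\mathscr O_{\mathbb P^n}(1)|Y\cong\mathscr O_Y(1)\cong L^{\otimes s}$, this produces $M'\in\Pic(\mathbb P^n)$ with $\mathscr O_{\mathbb P^n}(1)\cong M'^{\otimes s}$, which is impossible because $\Pic(\mathbb P^n)=\mathbb Z[\mathscr O_{\mathbb P^n}(1)]$ admits no $s$-th root of $\mathscr O_{\mathbb P^n}(1)$ for $s\geq 2$.

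For parts ii) and iii) I would bring in the Picard scheme. Since $Y$ is normal and projective, $\underline{\Pic}^0_Y$ is a proper group scheme whose tangent space at the origin is $H^1(\mathscr O_Y)$. In characteristic $0$ it is automatically reduced, while in characteristic $p$ reducedness is assumed (and holds when $H^2(\mathscr O_Y)=0$); in either case reducedness forces smoothness, so $\underline{\Pic}^0_Y$ is an abelian variety of dimension $\dim_k H^1(\mathscr O_Y)>0$. A positive-dimensional abelian variety over $k$ carries nontrivial torsion, and nontrivial prime-to-$p$ torsion when $p>0$, namely its $\ell$-torsion for any prime $\ell\neq p$. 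Hence $\Tors(\Pic(Y))\neq 0$ if $p=0$, and $\Tors^p(\Pic(Y))\neq 0$ if $p>0$. But were $Y$ a set-theoretic complete intersection, Corollary~\ref{tors"} i) would give $\Tors(\Pic(Y))\cong\Tors(\Pic(\mathbb P^n))=0$ (respectively $\Tors^p(\Pic(Y))\cong\Tors^p(\Pic(\mathbb P^n))=0$), a contradiction; so $Y$ cannot be a set-theoretic complete intersection.

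Finally, for part vi) in characteristic $p>0$ with $Y$ nonsingular I would appeal directly to Theorem~\ref{etale}: a set-theoretic complete intersection of dimension $\geq 3$ is cut out by $s=\codim_{\mathbb P^n}(Y)=n-\dim Y\leq n-3$ equations, so $\Coker(\alpha)=\Pic(Y)/\mathbb Z[\mathscr O_Y(1)]$ is a finite $p$-group and in particular $\rank\Pic(Y)=1$. In characteristic $0$ I would pass to the scheme-theoretic complete intersection $Y'$ with $Y'_{\red}=Y$ produced in the proof of Lemma~\ref{leffc}: as $\dim Y'\geq 3$, Theorem~\ref{gro2} gives $\Pic(\mathbb P^n)\cong\Pic(Y')=\mathbb Z[\mathscr O_{Y'}(1)]$, and together with the injectivity from part iv) the desired isomorphism $\Pic(\mathbb P^n)\cong\Pic(Y)$ comes down to the surjectivity of the restriction $\Pic(Y')\to\Pic(Y)$ across the nilpotent thickening $Y\hookrightarrow Y'$, equivalently to $\Pic(Y)=\mathbb Z[\mathscr O_Y(1)]$. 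I expect this to be the main obstacle: the obstructions to lifting a line bundle from $Y$ to $Y'$ live in the groups $H^2(Y,\mathscr N^{k}/\mathscr N^{k+1})$ attached to the nilradical $\mathscr N$ of $\mathscr O_{Y'}$, and these are not governed by the complete-intersection structure, since the clean vanishing $H^2(\mathscr O_{Y'}(-c))=0$ controls only the $\mathscr I_{Y'}$-adic filtration and not the reduced one. I would close this last point either by a direct analysis of these obstruction groups or, failing that, by invoking the known characteristic-zero result of Newstead~\cite{N2}.
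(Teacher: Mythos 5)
Parts i)--v) and the positive-characteristic half of vi) of your proposal coincide with the paper's own proof: Lemma \ref{leffc} to obtain $\Leff(\mathbb P^n,Y)$, Theorem \ref{gro1} for i), Corollary \ref{tors"} i) for iv) and for the torsion contradiction in ii)--iii) via the abelian variety $(\underline{\Pic}^0_Y)_{\red}$ of dimension $h^1(\mathscr O_Y)>0$, and Theorem \ref{etale} for vi) when $p>0$. (For v) you invoke Corollary \ref{tors"} ii) to extract an $s$-th root of $\mathscr O_{\mathbb P^n}(1)$, whereas the paper observes that $[L]$ is a nontrivial $s$-torsion class in $\Coker(\alpha)$ and quotes iv); the two are interchangeable.)

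The one genuine gap is the characteristic-zero half of vi). You correctly reduce, via Theorem \ref{gro2} applied to the scheme-theoretic complete intersection $Y'$ with $Y'_{\red}=Y$, to the surjectivity of the restriction $\Pic(Y')\to\Pic(Y)$, and you rightly suspect that chasing the obstruction groups $H^2(Y,\mathscr N^{k}/\mathscr N^{k+1})$ of the nilpotent filtration goes nowhere: the complete-intersection vanishing controls the $\mathscr I$-adic filtration of $Y'$ in $\mathbb P^n$, not the filtration of $Y'$ by thickenings of $Y$. The paper's device for closing this is transcendental rather than obstruction-theoretic. By the Lefschetz principle one takes $k=\mathbb C$ and compares the analytic exponential sequences of $Y'$ and of $Y$, which share the same underlying topological space, so that $H^2(Y',\mathbb Z)=H^2(Y,\mathbb Z)$. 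Since $Y'$ is a scheme-theoretic complete intersection of dimension $\geq 3$ one has $H^1(\mathscr O_{Y'})=H^2(\mathscr O_{Y'})=0$, and by Serre's GAGA \cite{gaga} the same vanishing holds analytically, giving $\Pic(Y')\cong H^2(Y',\mathbb Z)$; on the other side, part ii) (already established) gives $H^1(\mathscr O_Y)=0$, hence $\Pic(Y)\hookrightarrow H^2(Y,\mathbb Z)$. The resulting commutative square forces $\Pic(Y')\to\Pic(Y)$ to be an isomorphism, with no analysis of the lifting obstructions needed. Your fallback of citing Newstead \cite{N2} is legitimate as far as correctness goes --- the paper itself records that the characteristic-zero statement is known --- but it abandons the self-contained argument via Theorem \ref{gro2} that this part of the theorem is intended to provide.
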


\proof Part i) is a consequence of Lemma \ref{leffc} and of  
 \cite{[SGA2]}, Expos\'e X, Th\'eor\`eme 3.10.

\medskip

ii) Assume that $Y$ is a set-theoretic complete intersection in $\mathbb P^n$. Then by Lemma \ref{leffc}, $\Leff(\mathbb P^{n},Y)$ holds. Clearly, $Y$ meets every hypersurface of $\mathbb P^n$.  Then by Corollary \ref{tors"}, i), we get $\Tors(\Pic(\mathbb P^n))\cong\Tors(\Pic(Y))$. Since $\Pic(\mathbb P^n)=\mathbb Z$, we get $\Tors(\Pic(Y))=0$. But, under our hypotheses, this is absurd because $\Pic(Y)$ contains the subgroup $\Pic^0(Y)$ of isomorphism classes of line bundles on $Y$ which are algebraically trivial. Then $\Pic^0(Y)$ is the underlying set of the Picard scheme $\underline{\Pic}^0_Y$. 
The fact that $Y$ is normal implies that the Picard scheme $\underline{\Pic}^0_Y$ is proper over $k$, and in particular, $(\underline{\Pic}^0_Y)_{\red}$ is an abelian variety (see \cite{[FGA]}, \'Expos\'e 236, Th\'eor\`eme $2.1$, ii)).  In fact, since the characteristic of $k$ is zero, by a theorem of Chevalley, the abelian scheme $\underline{\Pic}^0_Y$ is reduced (see \cite{M}, Lecture 25, Theorem 1) and the tangent space $T_{\underline{\Pic}^0_Y,0}$ is isomorphic with $H^1(\mathscr O_Y)$ (see \cite{M}, Lecture 24), which by hypothesis is of dimension $q:=h^0(\mathscr O_Y)>0$. Then the abelian group $\Pic^0(Y)$ contains a lot of torsion (for example, if $e\geq 2$ is an integer, the $e$-torsion subgroup of $\Pic^0(Y)$  is isomorphic with $(\mathbb Z/e\mathbb Z)^{2q}\neq 0$ (see \cite{M1}, Chap. II,  \S7), which yields the desired contradiction because $\Tors(\Pic(\mathbb P^n))=0$.

Notice that ii) is also an easy consequence of i). In fact, take a non-trivial torsion element in $\Pic^0(Y)$ of order $m\geq 2$ which is prime to $p$ if $p>0$, i.e. a non-trivial line bundle $L\in\Pic(Y)$ such that $L^{\otimes m}\cong\mathscr O_Y$ for some
$m\geq 2$ (with $m$ is the least natural number with this property). Then $L$ produces the non-trivial connected cyclic \'etale cover $\tilde Y=\Spec(\oplus_{i=0}^{m-1}L^{\otimes i})$, and, in particular, $Y$ is not algebraically simply connected.

\medskip

iii) The proof in this case is almost identical with the proof of ii). The only difference is that in characteristic $p>0$ the Picard scheme may not be reduced. But this possibility is ruled out by our hypothesis. Moreover, the $e$-torsion subgroup of a $q$-dimensional abelian variety is still isomorphic with $(\mathbb Z/e\mathbb Z)^{2q}$, provided that $p$ does not divide $e$ (see \cite{M1}, Chap. II,  \S7).

\medskip

iv) Clearly, $Y$ meets every hypersurface of $\mathbb P^n$, and in particular the map $\alpha$ is injective (Corollary \ref{tors"}, i)). Moreover by the proof of i), since $Y$ is a set-theoretic complete intersection in $\mathbb P^n$ of dimension $\geq 2$, $\Leff(\mathbb P^n,Y)$ holds. Then the conclusion follows from Corollary \ref{tors"}, i). 

\medskip

v)   We have $\Coker(\Pic(\mathbb P^n)\to\Pic(Y))=\Pic(Y)/\mathbb Z[\mathscr O_Y(1)]$. Then the conclusion follows from iv). 

\medskip

vi) The result follows (in arbitrary characteristic) from Theorem \ref{etale}, while in characteristic zero -- from an old result of Sommese (see \cite{So}, Proposion (1.16)). However, if $p=0$ we shall give another proof using Theorem \ref{gro2} of Grothendieck. 

By Lefschetz's principle we may assume that $k=\mathbb C$.  Let $$f_1,...,f_r\in \mathbb C[T_0,T_1,...,T_{n}]$$ be homogeneous equations defining 
$Y$ set-theoretically in $\mathbb P^n$, and set $Y':=\Proj(\mathbb C[T_0,T_1,...,T_{n}])$. Since $\root\of{(f_1,\ldots,f_r)}=\mathscr I_+(Y)$, we have $Y'_{\red}=Y$, and in particular the underlying topological spaces of $Y'$ and $Y$ are the same. If $F$ is an algebraic (e.g. a coherent) sheaf on an algebraic scheme 
$Z$ over $\mathbb C$, we shall denote by $F^{\an}$ the analytic sheaf associated to $F$ in the sense of Serre's GAGA (see \cite{gaga}). Then we have the following commutative diagram
\begin{equation*}
\begin{CD}
{0} @>>> \mathbb Z_{Y'} @>>> \mathscr O_{Y'}^{\an} @>>> {\mathscr O_{Y'}^{\an}}^* @>>> {0}\\
@.  @V\id VV @VVV @VVV  \\
{0} @>>> \mathbb Z_{Y} @>>> \mathscr O_{Y}^{\an} @>>> {\mathscr O_{Y}^{\an}}^* @>>> {0}\\
\end{CD}\end{equation*}
where the rows are the exponential exact sequences of $Y'$ and $Y$ ($\mathbb Z_{Y'}$ and $\mathbb Z_Y$ are the constant sheaves on $Y'$ and on $Y$ respectively with stalks $\mathbb Z$) and the vertical arrows are the canonical restriction maps induced by the inclusion $Y\subseteq Y'$. The above diagram yields the following commutative diagram with exact rows
\begin{equation*}
\begin{CD}
H^1(\mathscr O_{Y'}^{\an}) @>>> H^1({\mathscr O_{Y'}^{\an}}^*) @>>> H^2(Y',\mathbb Z) @>>> H^2(\mathscr O_{Y'}^{\an})\\
 @VVV @VVV @VV\id V  @VVV\\
H^1(\mathscr O_{Y}^{\an}) @>>> H^1({\mathscr O_{Y}^{\an}}^*) @>>> H^2(Y,\mathbb Z) @>>> H^2(\mathscr O_{Y}^{\an})\\
\end{CD}\end{equation*}
Since $Y'$ is a scheme-theoretic complete intersection in $\mathbb P^n$ of dimension $\geq 3$ we get $H^i(\mathscr O_{Y'})=0$ for $i=1,2$. Then by Serre's GAGA
\cite{gaga} we also get $H^i(\mathscr O_{Y'}^{\an})=0$ for $i=1,2$. Moreover, Serre's GAGA, also implies
$$H^1({\mathscr O_{Y'}^{\an}}^*)\cong H^1(\mathscr O_{Y'}^*)=\Pic(Y').$$
Doing the same thing for the cohomology groups on the bottom row of the last diagram and observing that since $Y$ is a set-theoretic complete intersection of dimension $\geq 2$, by ii) we have $H^1(\mathscr O_Y)=0$, by putting things together we get the commutative diagram
\begin{equation*}
\begin{CD}
0 @>>> \Pic(Y') @>>> H^2(Y',\mathbb Z) @>>> 0\\
 @. @VVV @VV\id V  \\
0 @>>> \Pic(Y) @>>> H^2(Y,\mathbb Z) \\
\end{CD}\end{equation*}
From this diagram it follows that the canonical restriction map $\Pic(Y')\to\Pic(Y)$ is an isomorphism. Finally,  since $Y'$ is a scheme-theoretic complete intersection of dimension $\geq 3$, by Theorem \ref{gro2} of Grothendieck, the restriction map $\Pic(\mathbb P^n)\to \Pic(Y')$ is an isomorphism. Therefore the restriction map $\Pic(\mathbb P^n)\to \Pic(Y)$ is an isomorphism, i.e. $\Pic(Y)=\mathbb Z[\mathscr O_Y(1)]$.\qed

\begin{rems}\label{verdi} 
 {\em \begin{enumerate}
\item[i)] If $k=\mathbb C$ and $Y$ is nonsingular, part ii) of Theorem \ref{main} is an old result of Hartshorne  (see \cite{Ha3}, Corollary 8.6). Our proof of this more general result contained in ii) and in iii)  is completely different from Hartshorne's proof (loc. cit.).
\item[ii)] Newstead proved in \cite{N1} and in  \cite{N2} topological Lefschetz theorems (for singular cohomology with coefficients in $\mathbb Z$)  for submanifolds $Y$ of the complex projective space $\mathbb P^n$, which are  defined by ``not too many equations'' in $\mathbb P^n$. As applications he gave several examples of submanifolds $Y$ of the complex projective space $\mathbb P^n$ which are not set-theoretic complete intersections.
\item[iii)] In Theorem \ref{main}, vi), the finite $p$-group $\Pic(Y)/\mathbb Z[\mathscr O_Y(1)]$ may effectively be non-trivial (see Proposition \ref{p=2} in the next section).
\end{enumerate}}
\end{rems}

\section{Examples of projective varieties that are not set-theoretic  complete intersections}\label{third}\addtocounter{subsection}{1}

{\bf The Veronese embedding.} $\;$The image of the $s$-fold Veronese embedding of $\mathbb P^1$
in $\mathbb P^s$ (the rational normal curve of degree $s\geq 2$ in $\mathbb P^s$) is known to be a set-theoretic complete intersection in $\mathbb P^s$ (this is completely elementary, see \cite{V}, cf. also \cite{RoVa}), but not a scheme-theoretic complete intersection if $s\geq 3$ (because it is not subcanonical). 

On the other hand, for every integers $r,s\geq 2$ consider the $s$-fold Veronese
embedding $i\colon\mathbb P^r\hookrightarrow\mathbb P^{n(r,s)}$ over an algebraically
closed field of characteristic $p\geq 0$, with $n(r,s)=
\binom{r+s}{s}-1$. Then $Y:=i(\mathbb P^r)$ is not a set-theoretic
complete intersection in $\mathbb P^{n(r,s)}$, provided that $p$ does not divide $s$, if $p>0$. This follows immediately from Theorem \ref{main}, iv), because $\mathscr O_{\mathbb P^{n(r,s)}}(1)|Y=\mathscr O_{\mathbb P^r}(s)$ and $s\geq 2$ and $s$ is prime to $p$, if $p>0$. (These facts have already been noticed in \cite{B}, page 116.)

The situation when $p>0$ and $p|s$ is rather interesting (in the sense that in some cases $Y$ may be a set-theoretic complete intersection). Precisely, one has the following result:

\begin{prop}[Gattazzo \cite{Ga}]\label{p=2} Assume that $p>0$ and $r\geq 2$, and let $s=p^m$ be a positive power of $p$. Then the image $Y$ of the Veronese embedding 
$i\colon\mathbb P^r\hookrightarrow\mathbb P^{n(r,s)}$ is a set-theoretic $($but not a scheme-theoretic$)$ complete intersection.\end{prop}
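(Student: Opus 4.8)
The plan is to write down $n(r,s)-r$ explicit forms of degree $s$ whose common zero locus is exactly $Y$, exploiting the fact that in characteristic $p$ the $s$-th power map is the $m$-fold Frobenius, hence a bijection of the perfect field $k$. Index the homogeneous coordinates on $\mathbb P^{n(r,s)}$ by the monomials of degree $s$ in $r+1$ variables: write $T_\alpha$ for the coordinate attached to $\alpha=(\alpha_0,\dots,\alpha_r)$ with $|\alpha|=s$, so that $i$ pulls $T_\alpha$ back to $x^\alpha=x_0^{\alpha_0}\cdots x_r^{\alpha_r}$. Call the $r+1$ coordinates $z_i:=T_{s\mathbf e_i}$ (pulling back to $x_i^s$) the \emph{diagonal} ones, and for every \emph{non-diagonal} $\alpha$ (i.e. $|\alpha|=s$, $\alpha\neq s\mathbf e_i$) set
\[
f_\alpha:=T_\alpha^{\,s}-\prod_{i=0}^r z_i^{\alpha_i}.
\]
Each $f_\alpha$ is homogeneous of degree $s$, and there are exactly $\binom{r+s}{s}-(r+1)=n(r,s)-r=\codim_{\mathbb P^{n(r,s)}}(Y)$ of them; so if their common zero locus equals $Y$ set-theoretically, then $Y$ is a set-theoretic complete intersection.

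That the $f_\alpha$ vanish on $Y$ is immediate, since on the image $(x^\alpha)^s=\prod_i(x_i^s)^{\alpha_i}=\prod_i z_i^{\alpha_i}$. For the reverse inclusion I would take $P=[\dots:u_\alpha:\dots]$ with $f_\alpha(P)=0$ for all non-diagonal $\alpha$; writing $z_i=u_{s\mathbf e_i}$, this says $u_\alpha^{\,s}=\prod_i z_i^{\alpha_i}$ for every $\alpha$ (trivially so when $\alpha$ is diagonal). If all $z_i=0$, then every $u_\alpha^{\,s}=0$, forcing all coordinates of $P$ to vanish, which is impossible; hence some $z_i\neq 0$, and after relabelling we may assume $z_0\neq 0$. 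Here is the crux: because $s=p^m$ and $k$ is perfect, $t^{\,s}=z_0$ has a \emph{unique} solution $x_0:=z_0^{1/s}\neq 0$; setting $x_i:=u_{(s-1)\mathbf e_0+\mathbf e_i}/x_0^{\,s-1}$ for $i\geq 1$, the relation $f_{(s-1)\mathbf e_0+\mathbf e_i}(P)=0$ gives $x_i^{\,s}=(z_0^{\,s-1}z_i)/z_0^{\,s-1}=z_i$, so $x_i^{\,s}=z_i$ for all $i$. Consequently, for every $\alpha$,
\[
(x^\alpha)^s=\prod_i(x_i^{\,s})^{\alpha_i}=\prod_i z_i^{\alpha_i}=u_\alpha^{\,s},
\]
and injectivity of the $s$-th power map yields $x^\alpha=u_\alpha$, i.e. $P=i([x_0:\dots:x_r])\in Y$. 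This is precisely the step where $s=p^m$ is indispensable: for general $s$ the relations determine $P$ only up to $s$-th roots of unity, and the locus strictly contains $Y$.

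For the last assertion, that $Y$ is never a \emph{scheme-theoretic} complete intersection, I would argue by degrees. The $s$-uple embedding has $\deg Y=s^r$, while $Y$ is non-degenerate, so $\mathscr I_+(Y)$ contains no linear form; hence any $c=n(r,s)-r$ forms generating $\mathscr I_+(Y)$ have degree $\geq 2$, and for a complete intersection Bézout would force $s^r=\deg Y\geq 2^{\,c}$. A short estimate (e.g. $\binom{r+s}{r}\geq\binom{r+s}{2}$ for $r,s\geq 2$, together with $\log_2 s\leq s-1$) shows $2^{\,n(r,s)-r}>s^r$ for all $r,s\geq 2$, a contradiction. Alternatively, when $r\geq 3$ one may invoke Theorem \ref{gro2}: a scheme-theoretic complete intersection of dimension $\geq 3$ would make $\Pic(\mathbb P^{n(r,s)})\to\Pic(Y)$ an isomorphism, whereas this map sends the generator to $\mathscr O_Y(1)=\mathscr O_{\mathbb P^r}(s)$ and is multiplication by $s\geq 2$.

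The only real difficulty I anticipate is presenting the point-reconstruction in the reverse inclusion cleanly; once the bijectivity of Frobenius is isolated, the verification $(x^\alpha)^s=u_\alpha^{\,s}$ is purely formal, and the non-complete-intersection statement is a routine degree (or Picard) comparison.
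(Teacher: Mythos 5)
Your proof is correct, and it takes a genuinely different (and more complete) route than the paper's. The paper attributes the general statement to Gattazzo and, in its Appendix, only verifies the case $p=s=2$: there one takes the ideal $I$ generated by the $\tfrac{r(r+1)}{2}$ \emph{principal} $2\times 2$ minors $X_{ii}X_{jj}-X_{ij}^2$ of the symmetric catalecticant matrix $\Omega$ and checks, by an ideal-membership computation using $\mathrm{char}=2$, that the square of every non-principal minor lies in $I$, whence $\sqrt{I}=\mathscr I_+(Y)$. For $s=2$ your forms $f_\alpha=T_\alpha^2-z_iz_j$ are exactly those principal minors, so the two systems of equations coincide in that case; but your verification is pointwise rather than ideal-theoretic -- you reconstruct the preimage point using the bijectivity of the $m$-fold Frobenius on $k$ -- and it treats every $s=p^m$ uniformly, which is precisely the generality the paper delegates to \cite{Ga}. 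The paper's approach buys slightly more (an explicit containment $\mathscr I_+(Y)^2\subseteq I$ at the level of ideals, not just of zero sets), while yours buys simplicity and full generality; your key steps (uniqueness of $s$-th roots, $x_i^s=z_i$ from the equations indexed by $(s-1)\mathbf e_0+\mathbf e_i$, and the count $\binom{r+s}{s}-(r+1)=\codim Y$) all check out. For the non-scheme-theoretic assertion the paper uses the $s$-divisibility of $\mathscr O_Y(1)=\mathscr O_{\mathbb P^r}(s)$ in $\Pic(Y)\cong\mathbb Z$ (subcanonicality for $r=2$, Theorem \ref{gro2} for $r\geq 3$), whereas your B\'ezout estimate $2^{\,n(r,s)-r}>s^r$ (which reduces to $r(r-1)+s(s-1)>2$ and is therefore valid for all $r,s\geq 2$) is elementary and uniform; both arguments are sound.
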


\begin{rems}\label{ga} {\em \begin{enumerate} \item[i)]  Assume that $p>0$ and $r\geq 2$. Then by Proposition \ref{p=2} the image $Y$ of the $p^m$-fold Veronese embedding $\mathbb P^r\hookrightarrow\mathbb P^{n(r,p^m)}$ is a set-theoretic complete intersection in $\mathbb P^{n(r,p^m)}$,
with $n(r,p^m)=\binom{r+p^m}{p^m}$.  On the other hand, in the hypotheses of Proposition \ref{p=2}, $\Coker(\alpha)=\mathbb Z/p^m\mathbb Z$. This shows in particular that in Theorem \ref{main}, iv), $\Coker(\alpha)$ may be a non-trivial finite $p$-group if 
$p>0$ (compare with Corollary \ref{tors"}, i)), and also that Theorem \ref{main}, vi) is false in general in positive characteristic. 
\item[ii)] From Proposition \ref{p=2} and the above arguments we infer that $\Leff(\mathbb P^{n(r,p^m)},Y)$ does hold, where $Y$ is the image of the $p^m$-fold Veronese embedding $\mathbb P^r\hookrightarrow\mathbb P^{n(r,p^m)}$ over an algebraically closed field $k$ of characteristic $p>0$. This is in contrast with the case $p=0$ when $\Leff(\mathbb P^{\frac{r(r+3)}{2}},Y)$ never holds.
\item[iii)] Gattazzo proved in \cite{Ga} an even more general result than Proposition \ref{p=2}. Namely,  he showed that also some projections of the $p^m$-fold Veronese embedding are set-theoretic complete intersections if $p>0$. For instance, the projection $Y\subset\mathbb P^4$  of the Veronese surface in $\mathbb P^5$ from a general point of $\mathbb P^5$ is also a set-theoretic complete intersection in $\mathbb P^4$ if the characteristic of $k$ is $2$.\end{enumerate}}
\end{rems}

\noindent{\bf The Segre embedding.} Let $i\colon\mathbb P^m\times\mathbb P^n\hookrightarrow\mathbb P^{mn+m+n}$ be the Segre embeddimg of $\mathbb P^m\times\mathbb P^n$, with $m,n\geq 1$ and $m+n\geq 3$. Assume that the ground field is $\mathbb C$.
Then $Y:=i(\mathbb P^m\times\mathbb P^n)$ is not a set-theoretic complete intersection in  $\mathbb P^{mn+m+n}$. Indeed this follows from Theorem \ref{main}, vi), because $\Pic(\mathbb P^{mn+m+n})\cong\mathbb Z$ and $\Pic(\mathbb P^m\times\mathbb P^n)\cong\mathbb Z\times\mathbb Z$ (see also \cite{La}). However, much more is known in this case. Namely, Bruns and Schw\"anzl proved in \cite{BS} (see also \cite{Br} and \cite{BV}) that the arithmetic rank of the variety defined by the $(t\times t)$-minors of a generic $(p\times q)$-matrix is $pq-t^2+1$. If we take $t=2$, $p=m+1$ and $q=n+1$, we find that the arithmetic rank of $Y:=i(\mathbb P^m\times\mathbb P^n)$ is $pq-t^2+1=mn+m+n-2$.
Notice also that in the case when $m$ is arbitrary and $n=1$ this result is also a consequence  of Theorem 2 of the Introduction (cf. also Corollary \ref{scrolln1} below).

\medskip
\medskip

\noindent{\bf Examples of surfaces that are not set-theoretic complete intersections.}

 \medskip
 
{\bf i) Surfaces with geometric genus zero.}  Let $Y$ be any ruled nonrational surface (not necessarily minimal) over an algebraically closed field of arbitrary characteristic, i.e. $Y$ is birationally equivalent to $B\times\mathbb P^1$, with $B$ a nonsingular projective curve $B$ of genus $g>0$. Consider an arbitrary projective embedding $Y\hookrightarrow\mathbb P^n$. We have $h^1(\mathscr O_Y)=g>0$ and 
$H^2(\mathscr O_Y)=0$. Therefore, by Theorem \ref{main}, i) and ii), $Y$ is not a set-theoretic complete intersection in $\mathbb P^n$.  In particular, we obtain the following fact (proved in \cite{SW} using ad hoc arguments:  De Rham cohomology if $p=0$ and the \'etale cohomology if $p>0$): if $E\subset\mathbb P^2$ is an elliptic curve, then $Y:=E\times\mathbb P^1\subset\mathbb P^2\times\mathbb P^1\subset\mathbb P^5$ (via the Segre embedding) is not a set-theoretic complete intersection in $\mathbb P^5$. 

In fact, the same as above holds  if, instead of taking a ruled nonrational surface, we take any nonsingular projective surface $X$ with geometric genus $p_g=h^2(\mathscr O_Y)=0$ and
irregularity $q=h^1(\mathscr O_Y)>0$. For example a hyperelliptic surface $Y$; this is a surface with invariants $p_g=0$, $q=1$, 
$b_1=b_2=2$, $\chi(\mathscr O_Y)=0$ and Kodaira dimension $\kappa(Y)=0$ (see e.g. \cite{B1}). Such a surface $Y$ has the property that the Picard scheme is always reduced and the Albanese map $f\colon Y\to\Alb(Y)=B$ has the following properties: $B$ is an elliptic curve, every fiber of $f$ is an elliptic curve, and there is a second elliptic fibration 
$Y\to\mathbb P^1$ (loc. cit.).

\medskip

{\bf ii) Enriques surfaces.} Let $Y$ be an Enriques surface embedded in $\mathbb P^n$ over $k$ and assume that
$p\neq 2$. Then $Y$ is not a set-theoretic complete intersection in $\mathbb P^n$. Indeed, in this case $\Pic(Y)$ contains a non-trivial element of order $2$, namely the canonical class $\mathscr O_Y(K)$ (and in particular, is not algebraically simply connected because $\mathscr O_X(K)$ produces the cyclic non-trivial \'etale cover of $Y$ of degree $2$).  Then the conclusion follows from Theorem \ref{main},  i). Alternatively, $[\mathscr O_Y(K)]\not\in\Im(\Pic(\mathbb P^n)\to\Pic(Y))$ (because $\Pic(\mathbb P^n)=\mathbb Z$), whence $[\mathscr O_Y(K)]$ defines a non-trivial element of order $2$ in $\Coker(\Pic(\mathbb P^n)\to\Pic(Y))$. Since the characteristic of $k$ is $\neq 2$, the conclusion also follows from Corollary \ref{tors"}, i).

\medskip

{\bf iii) Ruled nonrational surfaces with rational singularities.} Let $X$ be a nonsingular ruled nonrational surface over  an algebraically closed field $k$ of characteristic zero,  and assume that $p\geq 0$ is arbitrary. Let $\pi\colon X\to B$ be the canonical ruled fibration, with $B$ a nonsingular projective curve of genus $g=h^1(\mathscr O_X)>0$, and assume that there exists at least one degenerate fiber (i.e. reducible) fiber $\pi^{-1}(b)$. Fix $m\geq 1$ points $b_1,\ldots,b_m\in B$ such that the fiber $\pi^{-1}(b_i)$ is degenerate for every $i=1,\ldots,m$. As is well known (see e.g. \cite{B2}, Lemma 7), if for every $i=1,\ldots,m$
we are given a closed connected curve $\varnothing\neq  Z_i\subsetneq\pi^{-1}(b_i)$ is a closed connected curve of $\pi^{-1}(b_i)$, then there exists a birational morphism $f\colon X\to Y$, with $Y$ a normal projective surface such that:

$\bullet$ $f(Z_i)$ is a point of $y_i\in Y$, $i=1,\ldots,m$, and the restriction $$f':=f|Z_1\cup\ldots\cup Z_m\colon X\setminus (Z_1\cup\ldots\cup Z_m)\to Y\setminus\{y_1,\ldots,y_m\}$$ is a biregular isomorphism.

$\bullet$ The singularities $y_i\in Y$ are rational, $i=1,\ldots,m$, i.e. $R^1f_*(\mathscr O_X)=0$, and in particular,
$h^1(\mathscr O_Y)= h^1( \mathscr O_X)>0$ and $H^2(\mathscr O_Y)= H^2(\mathscr O_X)=0$. Moreover, the point $y_i$ is effectively singular on $Y$ if $Z_i$ is not an exceptional curve of the first kind on $X$.

$\bullet$ The morphism $\pi\colon X\to B$ factors uniquely as $\pi=\pi'\circ f$, con $\pi'\colon Y\to B$. 

\medskip

Now, let $Y\hookrightarrow\mathbb P^n$ be any projective embedding. Then by Theorem \ref{main}, ii) and iii) we deduce that $Y$ is not a set-theoretic intersection in $\mathbb P^n$. 

\medskip

{\bf iv)  Nonruled normal surfaces.}  Let $B$ be a smooth projective curve of genus $g\geq 1$ over $k$. Fix $2g$ distinct points $x,\;y_1,\ldots,y_{2g-1}\in B$. Let $f\colon X\to B\times B$ be the blowing up morphism of $B\times B$ of centers the $2g-1$ proints $(x,y_1),\;\ldots,(x,y_{2g-1})\in B\times B$, and let $C$ be the strict transform of the curve $\{x\}\times B$ via $f$. Clearly, $u:=f|C$ yields an isomorphism $C\cong B$.

We shall show that there exists a birational morphism $g\colon X\to Y$, with $Y$ a normal projective surface, which blows down the curve $C$ to a point of $Y$. 
In order to do that, we firstly observe that $C^2=1-2g<0$ (by the construction of the curve $C$).  

On the other hand, in the commutative diagram
$$\begin{CD}
H^1(\mathscr O_{B\times B})@>>>H^1(\mathscr O_{\{x\}\times B})\\
@Vf^*VV @ VVu^* V\\
H^1(\mathscr O_X)@ >>> H^1(\mathscr O_C)\\
\end{CD}$$
the vertical maps are isomorphisms (since $f$ is the blowing up morphism of $B\times B$ of center finitely many nonsingular points), and the top horizontal arrow is surjective because the inclusion $\{x\}\times B\hookrightarrow B\times B$ is a section of the second projection of $B\times B$. It follows that the bottom horizontal map is also surjective. Let $\mathscr O_X(-C)$ be the ideal sheaf of $C$ in $\mathscr O_X$. 

We claim that the restriction maps $H^1(\mathscr O_{(i+1)C})\to H^1(\mathscr O_{iC})$ are isomorphisms for every $i\geq 1$, where  $iC$  is the $i$-th infinitesimal neighbourhood of $C$ in $X$. This follows from the cohomology exact sequence
{\small $$H^1(\mathscr O_X(-iC)/\mathscr O_X(-(i+1)C))\to H^1(\mathscr O_{(i+1)C})\to H^1(\mathscr O_{iC})\to H^2(\mathscr O_X(-iC)/\mathscr O_X(-(i+1)C)),$$}if we show that the first and the last vector space are zero. The last vector space is clearly zero because $C$ is a curve. The first vector space is zero because $C^2=1-2g$ implies that $\deg(\mathscr O_X(-iC)/\mathscr O_X(-(i+1)C))=i(2g-1)\geq 2g-1$ if $i\geq 1$. Recalling that the bottom horizontal arrow in the above diagram is surjective,  by induction we infer that the restriction maps 
\begin{equation}\label{y}H^1(\mathscr O_X)\to H^1(\mathscr O_{iC})\end{equation} 
are surjective for every $i\geq 1$. 

Moreover, we claim that the Picard scheme $\underline{\Pic}^0_X$ is always reduced. In charcateristic zero this holds by a very general theorem of Cartier (see \cite{[FGA]},  or also \cite{M}, Lecture 25). If instead $p>0$, we have $h^1(\mathscr O_X)=h^1(\mathscr O_{B\times B})=2g$ (by 
K\"unneth), and since $H^1(\mathscr O_X)$ is canonically identified with the tangent space to $\underline{\Pic}^0_X$ at the origin, we get $\dim(\underline{\Pic}^0_X)\leq 2g$. Moreover, this inequality is strict if and only if  $\underline{\Pic}^0_X$ is nonreduced. On the other hand, as is well known, the dual abelian variety of the abelian variety $(\underline{\Pic}^0_X)_{\red}$ is the Albanese variety 
$\Alb(X)=\Alb(B\times B)$, which is isomorphic to $\Alb(B)\times\Alb(B)$. Hence $\dim(\underline{\Pic}^0_X)_{\red}=2g$ because $\Alb(B)$ is the Jacobian of $B$ and its dimension is $g$. Putting things together it follows that $\dim(\underline{\Pic}^0_X)=2g$ and $\underline{\Pic}^0_X$ is reduced.

Now, the surjectivity of \eqref{y}, the inequality $C^2<0$  and the fact that the Picard scheme $\underline{\Pic}^0_X$ is reduced allow us to apply Theorem 
14.23 of \cite{B1} to deduce that (in arbitrary characteristic) there exists a birational morphism $g\colon X\to Y$, with $Y$ a normal projective surface such that:

\medskip

$\bullet$ The image $g(C)$ is a point of $y\in Y$, and

$\bullet$ The restriction $X\setminus C\to Y\setminus\{y\}$ of $g$ is a biregular isomorphism.

\medskip

Notice that the projectivity of $Y$ in the conclusion of Theorem 14.23 of \cite{B1} is the main point. 
The surface $Y$ is going to be our example. We only need to show
 that $H^1(\mathscr O_Y)\neq 0$. To see this, consider the canonical  exact sequence in low degrees
\begin{equation}\label{yy}0\to H^1(Y,\mathscr O_Y)\to H^1(X,\mathscr O_X)\to H^0(Y,R^1f_*(\mathscr O_X))=R^1g_*(\mathscr O_X)_y\to 0\end{equation}
associated to the spectral sequence 
$$E^{p,q}_2=H^p(Y, R^qg_*(\mathscr O_X))\Longrightarrow H^{p+q}(X,\mathscr O_X).$$
By Grothendieck-Zariski's  theorem on formal functions together with  the fact (proved above) that $H^1(\mathscr O_{iC})\cong H^1(\mathscr O_C)$ for every $i\geq 1$, we have
$$R^1g_*(\mathscr O_X)_y=\inv\lim_{i\in\mathbb N}H^1(\mathscr O_{iC})\cong H^1(\mathscr O_C), $$
whence the exact sequence  \eqref{yy} becomes
$$0\to H^1(\mathscr O_Y)\to H^1(\mathscr O_X)\to H^1(\mathscr O_C)\to 0.$$
Since $h^1(\mathscr O_C)=h^1(\mathscr O_B)=g$ and  $ h^1(\mathscr O_X)=h^1(\mathscr O_{B\times B})=2g$, we get $h^1(\mathscr O_Y)=g>0$.

Finally, let $Y\hookrightarrow\mathbb P^n$ be an arbitrary projective embedding of $Y$. Then by Theorem \ref{main}, ii) and iii) we deduce that $Y$ is not a set-theoretic intersection in $\mathbb P^n$. Notice that in this example the surface $Y$ is birationally equivalent to an abelian surface if $g=1$, and to a surface of general type if $g\geq 2$. 

\medskip

{\bf v) Nonnormal surfaces.} Let $C$ be an irreducible curve over $k$, which is obtained from its normalization $\tilde C$ by identifying $n+1$ distinct points $P_0,\ldots,P_n$, with $n\geq 1$ (in the terminology of Serre \cite{gral}, chap. IV, $C$ is defined by the module $\sum_{i=0}^nP_i$; in the classical terminology, the singularity of $C$ is an ordinary $(n+1)$-fold point with $(n+1)$ distinct tangents). For instance, if $n=1$ then $C$ has just one singularity, which is an ordinary double point with distinct tangents. Then by Oort \cite{Oo}, Proposition (2.3), there is an exact sequence 
$$0\to\mathbb G_m^{\oplus n}\to\underline{ \Pic}^0_C\to\underline{\Pic}^0_{\tilde C}\to 0$$
of algebraic groups,
where $\mathbb G_m=k\setminus\{0\}$ is the multiplicative group of $k$. Since $\Tors(\mathbb G_m)\neq 0$, it follows that  $\Tors(\Pic(C))\neq 0$. 

Now, let $E$ be a vector bundle of rank $r\geq 2$ on $C$ and consider the projective bundle $Y:=\mathbb P(E)$ associated to $E$. Since $\Pic(Y)\cong\Pic(C)\oplus\mathbb Z$, it follows that $\Tors(\Pic(Y))=\Tors(\Pic(C))$. Let $Y\hookrightarrow\mathbb P^N$ be any projective embedding of $Y$. Then by Corollary \ref{tors"}, i), $Y$ cannot be a set-theoretic complete intersection in $\mathbb P^N$. This example has some interest because if we assume that the curve $\tilde C$ is rational, then $Y$ is a singular (nonnormal) rational projective variety of dimension $r\geq 2$.

\section{The arithmetic rank of rational normal scrolls}\label{fourth}\addtocounter{subsection}{1}

Let $E$ be an ample vector bundle of rank $d\geq 2$ over the projective line $\mathbb P^1$. By a well known theorem of Grothendieck, $E$ can be written as a direct sum of line bundles
$$E=\mathscr O_{\mathbb P^1}(n_1)\oplus\cdots\oplus\mathscr O_{\mathbb P^1}(n_d),$$ 
and since $E$ is ample,  $n_i>0$ for every $i=1,\ldots,d$. Let 
$$\mathbb P(E)=\mathbb P(\mathscr O_{\mathbb P^1}(n_1)\oplus\cdots\oplus\mathscr O_{\mathbb P^1}(n_d))$$ 
be the projective bundle associated to $E$. Since $n_i>0$ for every $i=1,\ldots,d$, the tautological line bundle $\mathscr O_{\mathbb P(E)}(1)$ is ample, and in fact, very ample. Consider the closed embedding $i\colon\mathbb P(E)\hookrightarrow\mathbb P^N$ associated to the very ample complete linear system $|\mathscr O_{\mathbb P(E)}(1)|$, with $N:=\sum_{i=1}^dn_i+d-1$. Then $S_{n_1,\ldots,n_d}:=i(\mathbb P(E))=i(\mathbb P(\mathscr O_{\mathbb P^1}(n_1)\oplus\cdots\oplus\mathscr O_{\mathbb P^1}(n_d)))$ is a  nonsingular $d$-dimensional subvariety  of $\mathbb P^N$, which is known to be arithmetically Cohen-Macaulay in $\mathbb P^N$; moreover,  $\Pic(S_{n_1,\ldots,n_d})\cong\Pic(\mathbb P(E)\cong\mathbb Z\oplus\mathbb Z$ (generated by the classes of $\mathscr O_{\mathbb P(E)}(1)$ and $\pi^*(\mathscr O_{\mathbb P^1}(1))$, where $\pi\colon \mathbb P(E)\to \mathbb P^1$  is the canonical projection of $\mathbb P(E)=\mathbb P(\mathscr O_{\mathbb P^1}(n_1)\oplus\cdots\oplus\mathscr O_{\mathbb P^1}(n_d))$). The subvariety $S_{n_1,\ldots,n_d}$ of $\mathbb P^N$ is called the $d$-dimensional {\em rational normal scroll}. 

The aim of this section is to prove the following result (see Theorem 2 of the Introduction):

\begin{theorem}\label{mainscroll} Under the above notation and assumptions, the arithmetic rank of $S_{n_1,\ldots,n_d}$ in $\mathbb P^N$  is $N-2=\sum_{i=1}^dn_i+d-3$. \end{theorem}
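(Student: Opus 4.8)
The plan splits into a lower bound and an upper bound for $\ara(S_{n_1,\ldots,n_d})$ in $\mathbb P^N$.

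For the lower bound $\ara(S)\geq N-2$, the strategy is purely topological/cohomological, exactly as flagged in the Introduction. Since $S=\mathbb P(E)$ is nonsingular and $\Pic(S)\cong\mathbb Z\oplus\mathbb Z$ has rank $2$, it cannot satisfy the Picard-group conclusions forced by defining it with too few equations. Concretely, in characteristic zero I would invoke Corollary \ref{laz1} (the Lazarsfeld--Sommese result): if $S$ were defined set-theoretically by $s$ equations, then $\mathbb P^N\setminus S$ is covered by $s$ affines, and a Lefschetz-type argument shows that for $s\leq N-3$ the restriction map $\Pic(\mathbb P^N)\to\Pic(S)$ would be an isomorphism, contradicting $\rank\Pic(S)=2$. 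In positive characteristic I would instead apply Theorem \ref{etale}, which for $s\leq N-3$ forces $\Coker(\Pic(\mathbb P^N)\to\Pic(S))$ to be a finite $p$-group; but the class of $\pi^*(\mathscr O_{\mathbb P^1}(1))$ generates a free $\mathbb Z$-summand of $\Coker(\alpha)$, again a contradiction. Hence $s\geq N-2$ in every characteristic.

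For the upper bound $\ara(S)\leq N-2$, the plan is the constructive heart of the proof: to exhibit $N-2$ explicit homogeneous polynomials $f_1,\ldots,f_{N-2}$ in the coordinates $T_0,\ldots,T_N$ of $\mathbb P^N$ whose common zero locus is exactly $S_{n_1,\ldots,n_d}$. The natural coordinates on $\mathbb P^N$ are indexed by a ``staircase'' reflecting the blocks of sizes $n_1,\ldots,n_d$, and $S$ is classically cut out scheme-theoretically by the $2\times 2$ minors of a $2\times(N-1)$ matrix of linear forms built from these coordinates (the catalecticant/Hankel-type matrix of the scroll). That ideal has $\binom{N-1}{2}$ generators, far more than $N-2$, so the task is to replace the full minor ideal by $N-2$ equations having the same radical. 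The approach I would take is to form suitable $k$-linear (or low-degree) combinations of the defining minors, organized so that vanishing of the $N-2$ combinations forces, point by point, the vanishing of all minors; degrees can be kept low by exploiting the multigraded/recursive structure of the scroll.

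The main obstacle is this last step: producing the right $N-2$ combinations and \emph{proving} that $\mathscr V_+(f_1,\ldots,f_{N-2})=S$ set-theoretically, i.e.\ that no spurious components appear at the boundary where several minors simultaneously degenerate. I expect to handle this by stratifying $\mathbb P^N$ according to which coordinate blocks vanish, and checking on each stratum that the chosen equations already cut out the scroll, treating the ``overlap'' relations between consecutive blocks most carefully; an induction on $d$ (reducing $S_{n_1,\ldots,n_d}$ to a scroll of one lower dimension, compatibly with Corollary \ref{scrolln1} and the two-dimensional base case $S_{n_1,n_2}$ already known from Verdi \cite{V1}) is the cleanest way to organize both the construction and the verification, and also explains why the count comes out to be exactly $N-2$.
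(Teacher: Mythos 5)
Your treatment of the lower bound $\ara(S_{n_1,\ldots,n_d})\geq N-2$ is correct and coincides with the paper's Step 2: in characteristic zero one applies Corollary \ref{laz1} to the ample bundle $\mathscr O(d_1)\oplus\cdots\oplus\mathscr O(d_s)$ cutting out $S$, and in positive characteristic Theorem \ref{etale}; either way the conclusion contradicts $\Coker\bigl(\Pic(\mathbb P^N)\to\Pic(S)\bigr)\cong\mathbb Z$.

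The upper bound, however, is where the actual content of the theorem lies, and there your proposal is a plan rather than a proof. You say you would take ``suitable $k$-linear (or low-degree) combinations of the defining minors'' and verify the set-theoretic statement by stratifying $\mathbb P^N$ and inducting on $d$, but you never produce the $N-2$ polynomials, and the ideas you name do not lead to the construction the paper uses. The paper's equations are not combinations of the $2\times 2$ minors at all: within each coordinate block one takes the $n_i-1$ higher-degree Robbiano--Valla polynomials $F_{i,1},\ldots,F_{i,n_i-1}$ that cut out the rational normal curve $C_{n_i}$ set-theoretically (this accounts for $\sum n_i-d$ equations), and the remaining $2d-3$ equations are built from the ``bridge'' polynomials $B_{n_i,n_j}$, which have the two key properties of vanishing on the scroll and of restricting, on the locus cut out by the $F$'s, to $(u_it_j-t_iu_j)^{m_{i,j}}$. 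The crucial trick that brings the count of cross-block equations down from $\binom{d}{2}$ to $2d-3$ is to group the bridges by the weight $i+j$ and sum appropriate powers, $G_k=\sum_{i+j=k}B_{n_i,n_j}^{c_{i,j}}$; one then shows that the vanishing of all $G_k$ forces every minor $\alpha_{i,j}$ of the $2\times d$ parameter matrix to vanish, by an induction on a total order of the pairs $(i,j)$ that uses the Pl\"ucker relations $\alpha_{i,j}\alpha_{a,b}-\alpha_{a,j}\alpha_{i,b}+\alpha_{a,i}\alpha_{j,b}=0$ to isolate $\alpha_{a,b}^{e_{a,b}+1}$ from the sum. Without the bridges, the weighting device, and the Pl\"ucker argument, there is no evident way to certify that your $N-2$ combinations have no spurious zeros; in particular a generic-linear-combination or induction-on-$d$ scheme has no obvious mechanism for handling the simultaneous degeneration of minors coming from different blocks, which is precisely what the sum-of-powers-of-bridges construction is designed to control. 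So the gap is concrete: the explicit system of $N-2$ equations, and the verification that its zero locus is exactly $S_{n_1,\ldots,n_d}$, are missing.
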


\begin{corollary}\label{mainscroll1} Under the notation of Theorem $\ref{mainscroll}$, $S_{n_1,\ldots,n_d}$ is a set-theoretic complete intersection in $\mathbb P^N$
if and only if  $S_{n_1,\ldots,n_d}$ is a surface $($i.e. $d=2)$. In particular, the two dimensional rational normal scroll $S_{n_1,n_2}$ is set-theoretic complete intersection in $\mathbb P^{n_1+n_2+1}$, but not  a scheme-theoretic complete intersection, unless $n_1=n_2=1$.\end{corollary}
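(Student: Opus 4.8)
The plan is to establish the two inequalities $\ara(S)\ge N-2$ and $\ara(S)\le N-2$ separately, where I abbreviate $S:=S_{n_1,\ldots,n_d}$. The lower bound is of topological nature and exploits that $S=\mathbb P(E)$ is a $\mathbb P^{d-1}$-bundle over $\mathbb P^1$, so that $H^1(\mathscr O_S)=H^2(\mathscr O_S)=0$ and $\Pic(S)\cong\mathbb Z\oplus\mathbb Z$ has rank $2$. Suppose for contradiction that $S$ is defined set-theoretically by $s\le N-3$ forms $f_1,\ldots,f_s$ of degrees $d_1,\ldots,d_s\ge 1$. If $p=0$, set $\mathscr F=\bigoplus_{i=1}^s\mathscr O_{\mathbb P^N}(d_i)$, an ample bundle of rank $s$; the $f_i$ form a global section whose zero scheme $Z$ satisfies $Z_{\red}=S$, so singular cohomology gives $H^\bullet(Z,\mathbb Z)=H^\bullet(S,\mathbb Z)$. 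Since $2<N-s$, Theorem \ref{laz} makes $H^2(\mathbb P^N,\mathbb Z)\to H^2(S,\mathbb Z)$ an isomorphism; but via the exponential sequence $\Pic(S)\cong H^2(S,\mathbb Z)\cong\mathbb Z^2$, whereas $H^2(\mathbb P^N,\mathbb Z)=\mathbb Z$, a contradiction (cf. Corollary \ref{laz1}). If $p>0$, Theorem \ref{etale} applied to the nonsingular $S$ shows $\Coker(\Pic(\mathbb P^N)\to\Pic(S))$ is a finite torsion group, which is impossible since $(\mathbb Z\oplus\mathbb Z)/\mathbb Z[\mathscr O_S(1)]$ has rank $1$. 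Hence $s\ge N-2$ in every characteristic.

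For the reverse inequality I would exhibit $N-2$ explicit forms. Choose homogeneous coordinates $z_{i,j}$ on $\mathbb P^N$ with $1\le i\le d$, $0\le j\le n_i$ (so $\sum_i(n_i+1)=N+1$), and form the $2\times M$ \emph{scroll matrix}
\[
\Phi=\begin{pmatrix} z_{1,0}&\cdots&z_{1,n_1-1}&\cdots&z_{d,0}&\cdots&z_{d,n_d-1}\\ z_{1,1}&\cdots&z_{1,n_1}&\cdots&z_{d,1}&\cdots&z_{d,n_d}\end{pmatrix},\qquad M=\sum_{i=1}^d n_i=N-d+1,
\]
whose columns split into $d$ Hankel blocks. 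Then $S$ is exactly the locus $\{\rank\Phi\le 1\}$, i.e. the common zero set of the $\binom{M}{2}$ maximal minors; in particular $\codim_{\mathbb P^N}S=M-1=N-d$. Since $N-2=(M-1)+(d-2)$, the target exceeds the codimension by $d-2$, which is exactly why $S$ ceases to be a set-theoretic complete intersection once $d\ge 3$.

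The plan is to produce $N-2$ forms $F_1,\ldots,F_{N-2}$, each a combination of the $2\times2$ minors of $\Phi$ (so that $S\subseteq\mathscr V_+(F_1,\ldots,F_{N-2})$ holds automatically), and then prove the reverse, set-theoretic inclusion. Geometrically, $\rank\Phi\le1$ at a point means two things at once: each Hankel block must have rank $\le 1$ (its columns lie on the corresponding rational normal curve), and the $d$ blocks must share one common ruling parameter $[s:u]\in\mathbb P^1$. The first condition is, block by block, a set-theoretic complete intersection of rational normal curve type and supplies the internal equations, while the remaining forms must \emph{link} the blocks together. I would carry this out by induction on $d$: the base case $d=2$ recovers, with lower-degree equations, Verdi's result that $S_{n_1,n_2}$ is a set-theoretic complete intersection, and the inductive step attaches one more block using $n_d+1$ additional forms. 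The main obstacle is precisely this set-theoretic verification: one must show that the vanishing of the chosen $N-2$ combinations, rather than of all $\binom{M}{2}$ minors, already forces proportionality of every column of $\Phi$; the delicate points are those at which one or more blocks vanish identically, where the Hankel structure of the surviving blocks together with the linking forms must be used to reconstruct a single well-defined common ruling.

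Finally, the Corollary is immediate from Theorem \ref{mainscroll}: since $\ara(S)=N-2$ while $\codim_{\mathbb P^N}S=N-d$, the scroll $S$ is a set-theoretic complete intersection if and only if $N-2=N-d$, i.e. $d=2$. For $d=2$ the scroll $S_{n_1,n_2}\subset\mathbb P^{n_1+n_2+1}$ is therefore a set-theoretic complete intersection; it is a scheme-theoretic complete intersection only when $n_1=n_2=1$, since its homogeneous ideal is minimally generated by the $\binom{n_1+n_2}{2}$ quadratic minors of $\Phi$, and $\binom{n_1+n_2}{2}=n_1+n_2-1=\codim$ forces $n_1+n_2=2$, which gives the smooth quadric surface $S_{1,1}\subset\mathbb P^3$.
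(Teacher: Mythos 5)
Your deduction of the first assertion is exactly the paper's: $\ara(S)=N-2$ from Theorem \ref{mainscroll} versus $\codim S=N-d$ forces $d=2$, and your lower-bound discussion (Lazarsfeld/Sommese in characteristic zero, Theorem \ref{etale} in positive characteristic, using $\Pic(S)\cong\mathbb Z^2$) reproduces the paper's Step 2 verbatim; the long sketch of the upper bound $\ara(S)\le N-2$ is only a plan, but the corollary is entitled to cite the theorem, so this does not affect its proof. Where you genuinely diverge is the scheme-theoretic statement. The paper computes the canonical class
$\omega_{S_{n_1,n_2}}=\pi^*(\mathscr O_{\mathbb P^1}(n_1+n_2-2))\otimes\mathscr O_{S_{n_1,n_2}}(-2)$
and observes that a scheme-theoretic complete intersection must be subcanonical, which happens iff $n_1=n_2=1$. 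You instead count minimal generators: $I_+(S_{n_1,n_2})$ is minimally generated by the $\binom{n_1+n_2}{2}$ quadratic minors, and $\binom{n_1+n_2}{2}=n_1+n_2-1$ only for $n_1=n_2=1$. This works, but it rests on two points you leave implicit: (a) the classical Eagon--Northcott fact that the $2\times 2$ minors really are a minimal generating set (equivalently, that $h^0(\mathscr I_S(2))=\binom{c+1}{2}$ for this variety of minimal degree and that there are no linear forms in the ideal); and (b) the reduction of ``scheme-theoretic complete intersection'' to ``homogeneous ideal generated by $c$ forms'', which holds because $c$ forms cutting out a codimension-$c$ scheme are a regular sequence, so the ideal they generate is Cohen--Macaulay, hence unmixed and saturated, hence equal to $I_+(S)$. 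With those two remarks supplied your argument is complete; the paper's subcanonicality route avoids both and needs only the standard formula for $\omega_{\mathbb P(E)}$, while yours has the mild advantage of not invoking the adjunction/canonical-bundle computation.
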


\proof The first part is a direct consequence of Theorem \ref{mainscroll}. For the last part we notice that the canonical class of $S_{n_1,n_2}$ is given by
$$\omega_{\mathbb P(\mathscr O_{\mathbb P^1}(n_1)\oplus\mathscr O_{\mathbb P^1}(n_2))}=\pi^*(\mathscr O_{\mathbb P^1}(n_1+n_2-2))\otimes\mathscr O_{\mathbb P(\mathscr O_{\mathbb P^1}(n_1)\oplus\mathscr O_{\mathbb P^1}(n_2))}(-2),$$ 
whence $S_{n_1,n_2}$ is subcanonical in $\mathbb P^{n_1+n_2+1}$ if and only if $n_1=n_2=1$.\qed

\begin{rem*}The fact that the rational normal scrolls $S_{n_1,n_2}$ are set-theoretic complete intersections in  $\mathbb P^{n_1+n_2+1}$ was already known, see   Valla \cite{Va} and  Robbiano-Valla \cite{RoVa} in some special cases, and subsequently, Verdi \cite{V1} in general.  In particular, our approach also reproves (in a completely different way) the result of Verdi 
\cite{V1} for the two-dimensional rational normal scrolls. Moreover our method produces $n_1+n_2-1$ homogeneous equations defining $S_{n_1,n_2}$ as set-theoretic complete intersection in $\mathbb P^{n_1+n_2+1}$ which are in general of lower degrees with respect to the equations obtained in Verdi \cite{V1}. For example, if $n_1=n_2=2$, we prove that $S_{2,2}$ is the set-theoretic complete 
intersection of three hyperquadrics in $\mathbb P^5$, while Verdi needs two hyperquadrics and one hyperquartic.\end{rem*}

The proof of Theorem \ref{mainscroll} requires some preparation. 

We first recall that the rational normal curve $C_n$ of degree $n$ in $\mathbb{P}^n$, $n\ge 1,$ is defined  as the image of the Veronese map $\nu_n:\mathbb{P}^1\to \mathbb{P}^n$ sending $[\alpha,\beta]$ to $[\alpha^n,\alpha^{n-1}\beta,...,\alpha \beta^{n-1},\beta^n].$ It is  well known that $C_n$ may be realized as the locus of points which  give rank one to the matrix 
$$\begin{pmatrix}
X_0      & X_1 & . & . & .& X_{n-1}    \\
  X_1    &  X_2 & . & . & .& X_n
\end{pmatrix}.$$  Further, in \cite{RoVa} Valla and Robbiano, by using  Gr\"obner bases  theory,  showed that   $C_n$ is the set-theoretic complete intersection of the $n-1$ hypersurfaces defined by the following  polynomials 
\begin{equation}\label{set}  F_i= F_i(X_0,\ldots,X_n)=\sum_{\alpha=0}^i (-1)^{\alpha}{i\choose \alpha}X_{i+1}^{i-\alpha}X_{\alpha}X_i^{\alpha},\;\;i=1,\dots,n-1.\end{equation}
Notice  that it was Verdi who proved, see \cite{V}, for the first time  that $C_n$    is a set-theoretic complete intersection in $\mathbb P^n$. However, her equations and methods  are different  from those used by Robbiano and Valla, who found slightly simpler equations.

For every integers $d\ge 2$ and  $n_1,n_2,\dots,n_d>0$ as  above,  the $d$-dimensional rational normal scroll $S_{n_1,\dots,n_d}$ can also be described as the  rank one determinantal variety associated to the matrix
\begin{equation*}
A=\left(
\begin{array}{cccc}
X_{1,0}&X_{1,1}&\ldots&X_{1,n_1-1}\\
X_{1,1}&X_{1,2}&\ldots&X_{1,n_1}\\
\end{array}
\bigg|
\begin{array}{ccc}
\cdot&\cdot&\cdot\\
\cdot&\cdot&\cdot\\
\end{array}
\bigg|
\begin{array}{cccc}
X_{d,0}&X_{d,1}&\ldots&X_{d,n_d-1}\\
X_{d,1}&X_{d,2}&\ldots&X_{d,n_d}\\
\end{array}
\right)
\end{equation*}
i.e. a matrix consisting of $d$ blocks of sizes $2\times n_1,...,2\times n_d$ respectively,  with each block a generic catalecticant matrix 
(see \cite{H}, pp. 105--109). These blocks correspond to the canonical decomposition
$$H^0(\mathbb P(E),\mathscr O_{\mathbb P(E)}(1))=H^0(\mathbb P^1,\pi_*(\mathscr O_{\mathbb P(E)}(1)))=\bigoplus_{i=1}^d H^0(\mathbb P^1,\mathscr O_{\mathbb P^1}(n_i)).$$
Notice that  a basis of the $k$-vector space $H^0(\mathbb P^1,\mathscr O_{\mathbb P^1}(n_i))=k[T_{i,0},T_{i,1}]_{n_i}$ is  $T_{i,0}^{n_i}$, $T_{i,0}^{n_i-1}T_{i,1}$, $\ldots$, $T_{i,0}T_{i,1}^{n_i-1}$, $T_{i,1}^{n_i}$. Here $\mathbb P^1=\Proj(k[T_{i,0},T_{i,1}])$, with $T_{i,0}$ and $T_{i,1}$ two independent variables over $k$, $i=1,\dots,d$, and  $k[T_{i,0},T_{i,1}]_{n_i}$ is the $k$-vector space of all homogeneous polynomials in $T_{i,0}$ and $T_{i,1}$ of degree $n_i$.

The homogeneous ideal $\wp:=\mathscr I_+(S_{n_1,\dots,n_d})$ of $S_{n_1,\dots,n_d}$ in $\mathbb P^N$ (generated by all homogeneous polynomials vanishing on 
$S_{n_1,\dots,n_d}$) is thus the ideal generated by the $2\times 2$ minors of the matrix $A$ in the polynomial ring $k[X_{1,0},\dots,X_{1,n_1},\dots,X_{d,0},\dots, X_{d,n_d}].$   

We want to exhibit $N-2=\sum_{i=1}^dn_i+d-3$ homogeneous equations defining  $S_{n_1,\dots,n_d}$ in $\mathbb P^N$ set-theoretically.
In order to do it, the first step is to introduce a class of polynomials, which we call bridges and which will be crucial in order to detect the equations defining the rational normal scrolls. The bridges are defined in the following way. 

Let $a$ and $b$ be positive integers and 
let $m$ be the  least common multiple of $a$ and $b.$ We can write $m=ap=bq$ and for every $\alpha=0,\ldots,m$ we can divide $\alpha$ by $p$ and by $q$, thus getting $$\alpha=cp+r=eq+f,$$ where $0\le r \le p-1$ and $0\le f \le  q-1.$

In the polynomial ring $k[X_0,\dots,X_a,Y_0,\dots,Y_b]$ we consider the  polynomial
\begin{equation}\label{brdge1}B_{a,b}(X_0,\ldots,X_a,Y_0,\ldots,Y_b):= \sum_{\alpha=0}^m(-1)^{\alpha} {m \choose \alpha} X_{a-c}^{p-r}X_{a-c-1}^rY_e^{q-f}Y_{e+1}^f.\end{equation} 
We notice that if $\alpha=m$ then $c=a,$ $r=0$,  $e=b$ and $f=0$; in this case we let  $X_{-1}=1$ and $Y_{b+1}=1$.  
The polynomial $$B_{a,b}(X,Y):=B_{a,b}(X_0,\ldots,X_a,Y_0,\ldots,Y_b)$$ is called the  {\em bridge between $k[X_0,\dots,X_a]$ and $\;k[Y_0,\dots, Y_b]$} and it is homogeneous of  degree $m/a+m/b=p+q.$ When it is clear from the context, we shall simply write $B_{a,b}$ instead of $B_{a,b}(X,Y).$  The bridge $B_{a,b}$  has the following two relevant properties.

\vskip 3mm \noindent $\bullet $ {\em Property $1$.} For every $u,s,t,v\in k$ we have $$B_{a,b}(us^a,us^{a-1}t,\dots,ust^{a-1},ut^a,vs^b,vs^{b-1}t,\dots,vst^{b-1},vt^b)=0.$$
Indeed, what we have to do is to replace in $B_{a,b}$  every   $X_j$ by $us^{a-j}t^j$ and every $Y_h$ by $vs^{b-h}t^h.$ We get:
$$\sum_{\alpha=0}^m(-1)^{\alpha}{m\choose \alpha} (us^ct^{a-c})^{p-r}(us^{c+1}t^{a-c-1})^r(vs^{b-e}t^e)^{q-f}(vs^{b-e-1}t^{e+1})^f=$$ 
$$\sum_{\alpha=0}^m(-1)^{\alpha}{m\choose\alpha} u^{p}s^{c(p-r)+r(c+1)+(q-f)(b-e)+f(b-e-1)} v^{q}t^{(a-c)(p-r)+r(a-c-1)+e(q-f)+f(e+1)}=$$  $$=u^pv^qs^mt^m\left(\sum_{\alpha=0}^m(-1)^{\alpha}{m\choose\alpha}\right)=0.$$

\vskip 3mm \noindent$\bullet$ {\em Property $2$.} For every $s,t,z,w\in k$ we have 
$$B_{a,b}(s^a,s^{a-1}t,\dots,st^{a-1},t^a,z^b, z^{b-1}w,\dots,zw^{b-1},w^b)=(tz-sw)^m.$$

This time we have to replace  in $B_{a,b}$ every $X_j$ by $s^{a-j}t^j$ and every $Y_h$ by $z^{b-h}w^h.$ We get:
$$\sum_{\alpha=0}^m(-1)^{\alpha}{m\choose\alpha}(s^ct^{a-c})^{p-r}(s^{c+1}t^{a-c-1})^r(z^{b-e}w^e)^{q-f}(z^{b-e-1}w^{e+1})^f=$$ 
$$=\sum_{\alpha=0}^m(-1)^{\alpha}{m\choose \alpha}s^{c(p-r)+r(c+1)}t^{(p-r)(a-c)+r(a-c-1)}z^{(q-f)(b-e)+f(b-e-1)}w^{e(q-f)+f(e+1)} =$$ 
$$=\sum_{\alpha=0}^m(-1)^{\alpha}{m\choose\alpha}s^{\alpha}w^{\alpha}t^{m-{\alpha}}z^{m-{\alpha}}=(tz-sw)^m.$$

We notice that 
$$\{us^a,us^{a-1}t,\dots,ust^{a-1},ut^a,vs^b,vs^{b-1}t,\dots,vst^{b-1},vt^b\}$$ 
are the parametric equations of the rational normal scroll $S_{a,b}$ defined by the vanishing of the $(2\times 2)$-minors of the matrix 
$$M_{a,b}=\begin{pmatrix}
 X_0     &  X_1& \dots & X_{a-1} & Y_0 & Y_1 & \dots & Y_{b-1}\\ 
   X_1   &  X_2& \dots &X_a & Y_1 & Y_2 & \dots & Y_b
\end{pmatrix}.$$ 
Hence Property 1 implies that {\em $B_{a,b}$ is in the ideal generated by the $(2 \times 2)$-minors of $M_{a,b}.$}  Namely, $S_{a,b}=\overline{W}$ where $W$ is the set of points with coordinates (see e.g. \cite{Bel})
$$W=[us^a: us^{a-1}t: \dots :ust^{a-1}:ut^a:vs^b:vs^{b-1}t:\dots :vst^{b-1}:vt^b].$$
 By Property 1 we get  that $W$, and hence $S_{a,b}=\overline{W}$ is contained in the zero-locus of $B_{a,b}$. This implies that $B_{a,b}$ is contained in the defining ideal of $S_{a,b},$ which is the ideal generated by the $2\times 2$ minors of $M_{a,b}.$

Notice that, given $a$ and $b,$ while computing $B_{a,b}$ we can   avoid all the nasty euclidean divisions which appear in the definition itself. Better, one can do as follows. Let us consider  the following  list of monomials of degree $p$ in the $X_i$'s:  
$$\{{\bf X_a^p},X_{a}^{p-1}X_{a-1},\dots,X_{a}X_{a-1}^{p-1},{\bf X_{a-1}^p},X_{a-1}^{p-1}X_{a-2},\dots, X_{a-1}X_{a-2}^{p-1},{\bf X_{a-2}^p},\dots,X_1X_0^{p-1},{\bf X_0^p}\}.$$ 
In the same way one can write down the following list of monomials of degree $q$ in the $Y_i$'s:
$$\{{\bf Y_0^q},Y_0^{q-1}Y_1,\dots, Y_0Y_1^{q-1},{\bf Y_1^q},Y_1^{q-1}Y_2,\dots, Y_1Y_2^{q-1},
{\bf Y_2^q},\dots, Y_{b-1}Y_b^{q-1},{\bf Y_b^q}\}.$$ 
The first list has $ap+1=m+1$ terms and the second, $bq+1=m+1$ terms. The bridge $B_{a,b}$ is the sum of the products of the corresponding monomials in the two lists with appropriate binomial coefficients.

\begin{examps}\label{scr1}{\em
\begin{enumerate}
\item If $a=2,$ $b=4$, then $m=4,$ $p=2,$ and $q=1.$  The two lists are the following
$\{X_2^2,X_2X_1,X_1^2,X_1X_0,X_0^2\}$ and
$\{Y_0,Y_1,Y_2,Y_3,Y_4\}$. 

Hence $B_{2,4}(X,Y)=X_2^2Y_0-{4 \choose 1}X_2X_1Y_1+{4 \choose 2}X_1^2Y_2-{4 \choose 3}X_1X_0Y_3+X_0^2Y_4.$

\item If $a=2,$ $b=3$, then $m=6,$ $p=3$ and $q=2.$  The two lists are the following
$\{X_2^3,X_2^2X_1,X_2X_1^2,X_1^3,X_1^2X_0,X_1X_0^2,X_0^3\}$ and
$\{Y_0^2,Y_0Y_1,Y_1^2,Y_1Y_2,Y_2^2,Y_2Y_3,Y_3^2\}.$ Hence 
$B_{2,3}(X,Y)=X_2^3Y_0^2 -{6\choose 1}X_2^2X_1Y_0Y_1+{6 \choose 2}   X_2X_1^2Y_1^2-{6 \choose 3} X_1^3Y_1Y_2+{6 \choose 4}X_1^2X_0Y_2^2-{6 \choose 5}X_1X_0^2Y_2Y_3+X_0^3Y_3^2.$

\item If $a=b$, then $m=a$, $p=q=1.$ The two lists are $\{X_a,X_{a-1},X_{a-2},\dots, X_2,X_1,X_0\}$  and $\{Y_0,Y_1,Y_2,\dots,Y_{a-2},Y_{a-1},Y_a\}$ Hence $B_{a,a}(X,Y)=\sum_{j=0}^a(-1)^j{a\choose j}X_{a-j}Y_j.$

\item  If $a=3,$ $b=4$, then $m=12,$ $p=4$ and $q=3.$  Then we get 

$B_{3,4}(X,Y)=X_3^4Y_0^3-12X_3^3X_2Y_0^2Y_1+{12 \choose 2}X_3^2X_2^2Y_0Y_1^2-{12 \choose 3}X_3X_2^3Y_0Y_1^3+{12 \choose 4}X_2^4Y_1^2Y_2
-{12 \choose 5}X_2^3X_1Y_1Y_2^2+{12 \choose 6}X_2^2X_1^2Y_2^3-{12 \choose 7}X_2X_1^3Y_2^2Y_3+{12 \choose 8}X_1^4Y_2Y_3^2-{12 \choose 9}X_1^3X_0Y_3^3+
{12 \choose 10}X_1^2X_0^2Y_3^2Y_4-12X_1X_0^3Y_3Y_4^2+X_0^4Y_4^3.$
\end{enumerate}}\end{examps}
We are now ready to prove the main result of this section.

\medskip

{\em Proof of Theorem $\ref{mainscroll}$.} We divide the proof in two steps.

\medskip

{\em Step $1$.} {\em The following inequality holds:} 
$$\ara(S_{n_1,\ldots,n_d})\leq N-2=\sum_{i=1}^dn_i+d-3.$$

To prove Step 1 it is enough to find $\sum_{i=1}^dn_i+d-3$ homogeneous polynomials defining $S_{n_1,\ldots,n_d}$ in $\mathbb P^N$ set-theoretically.

Let us consider the polynomials $\{ F_{1,1},\dots,F_{1,n_1-1}\}$ in $k[X_{1,0},\dots, X_{1,n_1}]$ whose corresponding equations define set-theoretically the rational normal curve $C_{n_1}$ 
in $\mathbb{P}^{n_1}$, see (\ref{set}). Similarly we consider the polynomials $\{F_{2,1},\dots, F_{2,n_2-1}\}$ and so on up to $\{F_{d,1},\dots, F_{d,n_d-1}\}.$ This is a collection of $\sum_{i=1}^dn_i-d$ polynomials in $k[X_{1,0},\dots, X_{1,n_1},\dots,X_{d,0},\dots, X_{d,n_d}]$ belonging to the homogeneous  ideal $\wp=\mathscr I_+(S_{n_1,\dots,n_d})$.

We are going  to find some $2d-3$ more equations. This will be achieved by considering the  bridges  $B_{n_i,n_j}$ between $k[X_{i,0},\dots,X_{i,n_i}]$ and $k[X_{j,0},\dots,X_{j,n_j}]$ for every $1\le i<j\le d$. If $m_{i,j}=n_ip_{i,j}=n_jq_{i,j}$ is the least common multiple of $n_i$ and $n_j,$  then $B_{n_i,n_j}$ is homogeneous of degree $p_{i,j}+q_{i,j}.$ 

By Property 1 of the bridges we have that for every $1\le i<j\le d$ the polynomial $B_{n_i,n_j}$ belongs to the ideal of 
the polynomial ring $k[X_{i,0},X_{i,1},\dots, X_{i,n_i}, X_{j,0},X_{j,1},\dots, X_{j,n_j}]$ generated by the 
$(2\times 2)$-minors of the matrix
$$\begin{pmatrix}
X_{i,0}&X_{i,1}&\ldots&X_{i,n_i-1}&X_{j,0}&X_{j,1}&\ldots&X_{j,n_j-1}\\
X_{i,1}&X_{i,2}&\ldots&X_{i,n_i}&X_{j,1}&X_{j,2}&\ldots&X_{j,n_j}\\
\end{pmatrix}.$$ 
In particular, $B_{n_i,n_j}\in\wp=\mathscr I_+(S_{n_1,\dots,n_d})$. 

We associate a weight to the bridges by letting {\em weight $(B_{n_i,n_j}):=i+j.$} Hence we have that $B_{n_1,n_2}$ has weight 3, $B_{n_1,n_3}$ has weight 4, $B_{n_1,n_4}$ and $B_{n_2,n_3}$ have weight 5, $B_{n_1,n_5}$ and $B_{n_2,n_4}$ have weight 6, $B_{n_1,n_6},B_{n_2,n_5}$ and $B_{n_3,n_4}$ have weight 7 and so on.
Notice that the possible weight for a bridge is an integer $w$ such that  $3\le w \le  2d-1.$ 
Now for every $k=3,\dots , 2d-1,$ let $r_k$  be the least common multiple of the numbers  $p_{i,j}+q_{i,j}$ when $i+j=k,$ i.e. 
$$r_k:=\lcm \{p_{i,j}+q_{i,j}\ | \ i+j=k\}.$$ 
Further for every $i$ and $j$ such that $i+j=k$ we let 
$$c_{i,j}:=\frac{r_k}{p_{i,j}+q_{i,j}}.$$ 
Finally for every $k=3,\dots,2d-1,$ we let 
$$G_k:=\sum_{i+j=k} B_{n_i,n_j}^{c_{i,j}}.$$ 
It is clear that  $G_k$ is an  homogeneous polynomial of degree $r_k$ for every $k=3,\dots,2d-1.$ The polynomials $G_k$ are in $\wp$ because we have already seen that the bridges are in $\wp.$ 

For example we have $r_3=p_{1,2}+q_{1,2}$ so that $c_{1,2}=1$ and $G_3=B_{n_1,n_2}.$ Also $r_4=p_{1,3}+q_{1,3}$ so that $c_{1,3}=1$ and $G_4=B_{n_1,n_3}.$  Instead we have $r_5=\lcm(p_{1,4}+q_{1,4},p_{2,3}+q_{2,3})$,
so that $$c_{1,4}=\frac{r_5}{p_{1,4}+q_{1,4}}, \ \ c_{2,3}=\frac{r_5}{p_{2,3}+q_{2,3}},\;\;\text{and} \;\;
G_5=B_{n_1,n_4}^{c_{1,4}}+B_{n_2,n_3}^{c_{2,3}}.$$

Set 
$$J=(F_{1,1},\dots,F_{1,n_1-1},\dots,F_{d,1},\dots, F_{d,n_d-1},G_3,\dots,G_{2d-1}).$$ 
We are going to prove that the equations corresponding to these  $\sum n_i-d+2d-3=\sum n_i+d-3$ homogeneous polynomial 
  define set-theoretically the scroll $S_{n_1,\dots,n_d}$.

In other words, it's enough to prove the following

\begin{equation}\label{radical}  \wp=\sqrt{J}.\end{equation}

Clearly, $J\subseteq \wp$, so that $\root\of J\subseteq\wp$. On the other hand, by Nullstellensatz, the reverse inclusion is equivalent with $\mathscr V_+(J)\subseteq \mathscr V_+(\wp)$. To prove this latter inclusion, let  $P$ be an arbitrary point of $\mathscr V_+(J)$. We have  to show that  $P\in \mathscr V_+(\wp)$.
Since 
$P\in \mathscr V_+(F_{1,1},\dots,F_{1,n_1-1},\dots,F_{d,1},\dots, F_{d,n_d-1}),$ the coordinates of $P$ are of the following form
$$[t_1^{n_1},t_1^{n_1-1}u_1,\dots,t_1u_1^{n_1-1},u_1^{n_1};\ldots;t_d^{n_d},t_d^{n_d-1}u_d,\dots,t_du_d^{n_d-1},u_d^{n_d}],$$ 
or, in a compact way, 
$$\{X_{i,j}=t_i^{n_i-j}u_i^j\}_{\substack{i=1,\dots,d,\; j=0,\dots,n_i}}.$$ 
Let us consider the matrix $$D:=\begin{pmatrix}
  t_1    &   t_2 & \dots & t_d \\
   u_1   &  u_2 & \dots & u_d
\end{pmatrix}$$ 
and for every $1\le i < j \le d,$ let $\alpha_{i,j}$ be the $2\times 2$ minor involving its i-th and j-th column. We have $0=G_3(P)=B_{n_1,n_2}(P),$ hence, by Property 2 of the bridges,   
$$0=B_{n_1,n_2}(t_1^{n_1},t_1^{n_1-1}u_1,\dots, t_1u_1^{n_1-1},u_1^{n_1},t_2^{n_2},t_2^{n_2-1}u_2,\dots, t_2u_2^{n_2-1},u_2^{n_2})=(u_1t_2-t_1u_2)^{m_{1,2}}.$$ 
This implies $\alpha_{1,2}=0.$ 

In the same way we have $0=G_4(P)=B_{n_1,n_3}(P),$ hence 
$$0=B_{n_1,n_3}(t_1^{n_1},t_1^{n_1-1}u_1,\dots, t_1u_1^{n_1-1},u_1^{n_1},t_3^{n_3},t_3^{n_3-1}u_3,\dots, t_3u_3^{n_3-1},u_3^{n_3})=(u_1t_3-t_1u_3)^{m_{1,3}}.$$ 
This implies $\alpha_{1,3}=0$. 

Further $0=G_5(P)=(B_{n_1,n_4}^{c_{1,4}}+B_{n_2,n_3}^{c_{2,3}})(P),$ hence
$$0=(B_{n_1,n_4}^{c_{1,4}}+B_{n_2,n_3}^{c_{2,3}})(P)=(B_{n_1,n_4}(P))^{c_{1,4}}+(B_{n_2,n_3}(P))^{c_{2,3}}=$$
$$=(B_{n_1,n_4}(t_1^{n_1},t_1^{n_1-1}u_1,\dots, t_1u_1^{n_1-1},u_1^{n_1},t_4^{n_4},t_4^{n_4-1}u_4,\dots, t_4u_4^{n_4-1},u_4^{n_4}))^{c_{1,4}}+$$
$$+(B_{n_2,n_3}(t_2^{n_2},t_2^{n_2-1}u_2,\dots, t_2u_2^{n_2-1},u_2^{n_2},t_3^{n_3},t_3^{n_3-1}u_3,\dots, t_3u_3^{n_3-1},u_3^{n_3}))^{c_{2,3}}=$$  $$=(u_1t_4-t_1u_4)^{m_{1,4}c_{1,4}}+(u_2t_3-t_2u_3)^{m_{2,3}c_{2,3}}.$$ 
This implies $$\alpha_{1,4}^{m_{1,4}c_{1,4}}+\alpha_{2,3}^{m_{2,3}c_{2,3}}=0.$$
In the same way, for every $k=3,\dots,2d-1$, we get  
$$0=\sum_{i+j=k}\alpha_{i,j}^{m_{i,j}c_{i,j}}=\sum_{i+j=k}\alpha_{i,j}^{e_{i,j}},$$ 
where, for simplicity, we put $e_{i,j}:=m_{i,j}c_{i,j}.$

\vskip 3mm We claim that this implies $\alpha_{i,j}=0$ for every $1\le i<j\le d.$ 

\vskip 3mm  To prove this claim we order the $\alpha_{i,j}$'s as follows: $$\alpha_{i,j}<\alpha_{h,k}\iff \begin{cases}
 i+j<h+k,\;  \text{ or} \\

   i+j=h+k\; \;\text{and}  \ \   i<h.
\end{cases}$$  First observe that 
$\alpha_{1,2}=\alpha_{1,3}=0,$ 
so that we can argue by induction. Let us assume that $\alpha_{1,3}<\alpha_{a,b}$ and that $\alpha_{h,k}=0$ for every $\alpha_{h,k}< \alpha_{a,b}.$ One has 
$$\alpha_{a,b}^{e_{a,b}+1}=\alpha_{a,b}\left (\sum_{i+j=a+b}\alpha_{i,j}^{e_{i,j}}\right)-\alpha_{a,b}\left(\sum_{\substack{i+j=a+b\\ (i,j)\neq (a,b)}}\alpha_{i,j}^{e_{i,j}}\right)
$$
We only need to prove that if $(i,j)\neq (a,b)$ and $i+j=a+b,$ then $\alpha_{i,j}\alpha_{a,b}=0.$ If $i<a,$ then $\alpha_{i,j}<\alpha_{a,b}$ so that, by the inductive assumption $\alpha_{i,j}=0$ and we are done. If, instead, $i>a,$ then $j<b$ so that  
$$a<i<j<b.$$ 
By Pl\"ucker's relations, we have 
$$\alpha_{i,j}\alpha_{a,b}-\alpha_{a,j}\alpha_{i,b}+\alpha_{a,i}\alpha_{j,b}=0.$$ 
Since $\alpha_{a,i}<\alpha_{a,b}$ and $\alpha_{a,j}<\alpha_{a,b},$ we have $\alpha_{a,i}=\alpha_{a,j}=0$ and  the claim is proved.

As a consequence we get that the matrix $D$ has rank one. But this clearly implies that the matrix

$$\begin{pmatrix}
  t_1^{n_1}    & t_1^{n_1-1} u_1& \dots & t_1u_1^{n_1-1} & \dots   & t_d^{n_d} & t_d^{n_d-1}u_d & \dots & t_du_d^{n_d-1} \\
  t_1^{n_1-1} u_1 & t_1^{n_1-2} u_1^2 & \dots & u_1^{n_1} & \dots 
      &  t_d^{n_d-1} u_d & t_d^{n_d-2} u_d^2 & \dots & u_d^{n_d}
\end{pmatrix}$$ has also rank one.  This  means that the point $P$ is in $\mathscr V_+(\wp)$, which proves Step 1.

\medskip

{\em Step $2$.} {\em The following inequality holds:
\begin{equation}\label{top}\ara(S_{n_1,\ldots,n_d})\geq N-2=\sum_{i=1}^dn_i+d-3.\end{equation}}

The proof of this step is topological. We first notice the fact that the cokernel of the canonical restriction map $\alpha\colon \Pic(\mathbb P^N)\to\Pic(S_{n_1,\ldots,n_d})$  is isomorphic to $\mathbb Z$ (and this holds in arbitrary characteristic).
 
Now, assume by way of contradiction that $\ara(S_{n_1,\ldots,n_d})\leq N-3$.  If the characteristic $p$ of the ground field $k$  is $0$,
then by Lefschetz's principle we can assume $k=\mathbb C$. 
Since
$\ara(S_{n_1,\ldots,n_d})\leq N-3$, then  by Corollary \ref{laz1} the map $\alpha$ imust be an isomorphism. This is a contradiction because $\Pic(S_{n_1,\ldots,n_d})\cong\mathbb Z\times\mathbb Z$.

If instead $p>0$, the inequality $\ara(Y)\leq N-3$, together with  Theorem \ref{etale}, imply that $\Coker(\alpha)$ is a finite $p$-group, which is again a contradiction. This proves Step 2 in arbitrary characteristic. 

Notice that in Step 2 there is nothing to prove if $\dim(S_{n_1,\ldots,n_d})=2$ because in this case
$\codim_{\mathbb P^N}(S_{n_1,\ldots,n_d})=N-2$. If instead $\dim(S_{n_1,\ldots,n_d})=3$ then $\codim_{\mathbb P^N}(S_{n_1,\ldots,n_d})=N-3$, and since $\Pic(S_{n_1,\ldots,n_d})\cong \mathbb Z\times\mathbb Z$, Step 2 is also a consequence of Theorem \ref{main}, vi).

Then Step 1 and Step 2 conclude the proof of Theorem \ref{mainscroll}. \qed

\medskip\medskip

In characteristic zero Corollary \ref{laz1} and the proof of Step 2  yield actually the following result:

\begin{corollary}\label{scrolln} Under the hypotheses of Theorem $\ref{mainscroll}$, assume that the characteristic of $k$ is zero and $d\geq 3$. Then there exists no ample vector bundle $F$  of rank $\leq N-3$ on $\mathbb P^N$ and a global section  of $F$ vanishing precisely on $S_{n_1,\ldots,n_d}$.
Moreover, this upper bound for the rank of $F$ is optimal. \end{corollary}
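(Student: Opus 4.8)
The plan is to recognize that this statement is a direct strengthening of Step~2 in the proof of Theorem~\ref{mainscroll}, now phrased for an arbitrary ample vector bundle rather than for the split bundle attached to a set of explicit defining equations; the non-existence part will follow from the same topological input (Corollary~\ref{laz1}), and optimality will follow from Step~1.

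First I would prove the non-existence assertion. Suppose, for contradiction, that there is an ample vector bundle $F$ on $\mathbb P^N$ of rank $e:=\rank(F)\le N-3$ together with a section $s\in\Gamma(\mathbb P^N,F)$ whose zero locus $Z(s)$ equals $S:=S_{n_1,\ldots,n_d}$ set-theoretically. By Lefschetz's principle we may assume $k=\mathbb C$. Since each irreducible component of the zero locus of a section of a rank-$e$ bundle has codimension at most $e$ (Krull's height theorem, computing locally), the equality $Z(s)_{\red}=S$ forces $\codim_{\mathbb P^N}(S)=N-d\le e$, hence $\dim Z(s)=d\ge N-e$. Thus the pair $(\mathbb P^N,Z(s))$ satisfies the hypotheses of Theorem~\ref{laz} in Lazarsfeld's general form (the case $\dim Z(s)\ge N-e$ recalled in the Remark following Corollary~\ref{laz1}), and since $N-e\ge 3$ I may invoke Corollary~\ref{laz1}: the restriction map $\Pic(\mathbb P^N)\to\Pic(S)$ is an isomorphism (using $Z(s)_{\red}=S$ and that the underlying Lefschetz statement depends only on the topological space). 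This is absurd, because $\Pic(\mathbb P^N)\cong\mathbb Z$ while $\Pic(S)\cong\mathbb Z\oplus\mathbb Z$.

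Next I would establish optimality by exhibiting an ample bundle of rank $N-2$ with a section vanishing exactly on $S$. By Step~1 of the proof of Theorem~\ref{mainscroll} the scroll $S$ is cut out set-theoretically by the $N-2$ homogeneous polynomials $F_{1,1},\ldots,F_{d,n_d-1},G_3,\ldots,G_{2d-1}$; call them $g_1,\ldots,g_{N-2}$ and put $a_\ell:=\deg(g_\ell)$. Then $F:=\bigoplus_{\ell=1}^{N-2}\mathscr O_{\mathbb P^N}(a_\ell)$ is an ample vector bundle of rank $N-2$ (a direct sum of ample line bundles being ample), and the section $s:=(g_1,\ldots,g_{N-2})$ has $Z(s)=\mathscr V_+(g_1,\ldots,g_{N-2})=S$ set-theoretically. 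Hence rank $N-2$ is attainable, so the threshold $N-3$ in the first part cannot be raised, which is exactly the asserted optimality.

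I do not expect a genuine obstacle here, since the content is entirely contained in Corollary~\ref{laz1} and in Step~1 of Theorem~\ref{mainscroll}. The only points requiring care are the verification that the dimension inequality $\dim Z(s)\ge N-e$ holds automatically (so that Lazarsfeld's extension, and not merely Sommese's original statement with $\dim Z(s)=N-e$, is available across the full range $N-d\le e\le N-3$), and the passage from the possibly non-reduced scheme $Z(s)$ to its reduction $S$ when applying Corollary~\ref{laz1}, which is legitimate precisely because the governing theorem is topological in nature.
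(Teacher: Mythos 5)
Your proof is correct and follows essentially the same route as the paper, which derives the non-existence statement from Corollary \ref{laz1} (in Lazarsfeld's form requiring only $\dim Z(s)\ge N-e$) together with $\Pic(S_{n_1,\ldots,n_d})\cong\mathbb Z\oplus\mathbb Z$, and the optimality from the $N-2$ explicit equations of Step 1 viewed as a section of a direct sum of ample line bundles. The two points you single out for care (the automatic inequality $\dim Z(s)\ge N-e$ and the passage from $Z(s)$ to its reduction) are exactly the ones the paper itself glosses over, and your treatment of them is sound.
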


\begin{rem*} Corollary \ref{scrolln} generalizes the following result noticed by Lazarsfeld in \cite{La}: the image $S_{1,1,1}$ of the Segre embedding $\mathbb P^2\times\mathbb P^1\hookrightarrow\mathbb P^5$ cannot be the zero locus of a global section of an ample vector bundle of rank two on $\mathbb P^5$.
\end{rem*}

\begin{examps}\label{final}{\em
1.  Let us consider the $2$-dimensional scroll $S_{n_1,n_2}$ in $\mathbb P^{n_1+n_2+1}$;  its  defining ideal $\wp=\mathscr I_+(S_{n_1,n_2})$ is  generated by the $2\times 2$ minors of the matrix
$$\begin{pmatrix}
 X_0 &  X_1 &\cdots  &X_{n_1-1} & Y_0 & Y_1 &\cdots & Y_{n_2-1}  \\
 X_1    &  X_2 & \cdots&X_{n_1} &Y_1 & Y_2 & \cdots & Y_{n_2}
\end{pmatrix}$$ 
The proof of Theorem \ref{mainscroll} shows in particular that $S_{n_1,n_2}$ is set-theoretic complete intersection in $\mathbb P^{n_1+n_2+1}$ via the following $n_1+n_2-1$ equations:
$$ F_{1,i}(X_0,\ldots,X_{n_1})=\sum_{\alpha=0}^i (-1)^{\alpha}{i\choose \alpha}X_{i+1}^{i-\alpha}X_{\alpha}X_i^{\alpha},\;\;i=1,\dots,n_1-1,$$
$$ F_{2,j}(Y_0,\ldots,Y_{n_2})=\sum_{\beta=0}^j (-1)^{\beta}{j\choose \beta}Y_{j+1}^{j-\beta}Y_{\beta}Y_j^{\beta},\;\;j=1,\dots,n_2-1,$$
and the bridge 
$B_{n_1,n_2}(X_0,\ldots,X_{n_1};Y_0,\ldots,Y_{n_2})$ (see the formula \eqref{brdge1}). These equations are simpler and of lower degree than the  equations found by Verdi in \cite{V1}.

\medskip  

2. In order to give the idea of the size of the polynomials involved in our computation, we now explicitely write down the equations defining set-theoretically the scroll $S_{2,2,3,4}$ in $\mathbb P^{14}.$

The defining ideal of this scroll is the ideal $\wp$ generated by the $2\times 2$ minors of the matrix

$$\setcounter{MaxMatrixCols}{20}
\begin{pmatrix}
    X_0  & X_1 & Y_0 & Y_1 & Z_0 & Z_1 & Z_2 & T_0 & T_1 & T_2 & T_3 \\
    X_1  & X_2 & Y_1 & Y_2 & Z_1 & Z_2 & Z_3 & T_1 & T_2  & T_3 & T_4
   \end{pmatrix}$$  This scroll has dimension 4 and codimension  9. The main result of this section proves that the arithmetic rank  is $ara(S_{2,2,3,4})=12.$ Namely $\wp$ is the radical of the ideal generated by the following polynomials.
   $$X_0X_2-X_1^2, \ \ Y_0Y_2-Y_1^2,\ \ Z_0Z_2-Z_1^2, \ \ Z_0Z_3^2-2Z_1Z_2Z_3+Z_2^3 $$ 
   $$T_0T_2-T_1^2, \ \ T_0T_3^2-2T_1T_2T_3+T_2^3, \ \  T_0T_4^3-3T_1T_3T_4^2+3T_2T_3^2T_4-T_3^4$$  
   $$B_{2,2}(X,Y), \ \  B_{2,3}(X,Z), \ \  B_{2,4}(X,T)^5+B_{2,3}(Y,Z)^3, \ \ B_{2,4}(Y,T), \ \ B_{3,4}(Z,T).$$
}\end{examps}

\begin{corollary}\label{scrolln1} Let $i\colon\mathbb P^{d-1}\times\mathbb P^1\hookrightarrow\mathbb P^{d(r+1)-1}$ be the Segre-Veronese embedding given by the complete linear system $|\mathscr O_{\mathbb P^{d-1}\times\mathbb P^1}(1,r)|$. 
Then the subvariety  $i(\mathbb P^{d-1}\times\mathbb P^1)$ is the set-theoretic intersection of $d(r+1)-3$ homogeneous equations in $\mathbb P^{d(r+1)-1}$.
\end{corollary}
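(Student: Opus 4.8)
The plan is to recognize the Segre--Veronese image $i(\mathbb P^{d-1}\times\mathbb P^1)$ as a rational normal scroll of the special type $S_{r,\ldots,r}$ ($d$ equal entries) already treated in Theorem \ref{mainscroll}, and then simply read off its arithmetic rank. Once the identification is made, the corollary is a direct specialization of Theorem \ref{mainscroll}, so essentially no new computation is required.

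To carry this out, I would first exploit the invariance of projectivization under twisting by a line bundle on the base, namely $\mathbb P(E\otimes L)\cong\mathbb P(E)$. Taking $E=\mathscr O_{\mathbb P^1}^{\oplus d}$ and $L=\mathscr O_{\mathbb P^1}(r)$, and using $\mathbb P(\mathscr O_{\mathbb P^1}^{\oplus d})=\mathbb P^1\times\mathbb P^{d-1}$, I obtain $\mathbb P(\mathscr O_{\mathbb P^1}(r)^{\oplus d})\cong\mathbb P^1\times\mathbb P^{d-1}$. Under this isomorphism the tautological bundle $\mathscr O_{\mathbb P(E)}(1)$ corresponds to $\mathscr O_{\mathbb P^1}(r)\boxtimes\mathscr O_{\mathbb P^{d-1}}(1)$; this is verified by pushing forward along the projection $\pi$ to $\mathbb P^1$, since $\pi_*\bigl(\mathscr O_{\mathbb P^1}(r)\boxtimes\mathscr O_{\mathbb P^{d-1}}(1)\bigr)=\mathscr O_{\mathbb P^1}(r)\otimes H^0(\mathbb P^{d-1},\mathscr O(1))=\mathscr O_{\mathbb P^1}(r)^{\oplus d}=\pi_*\mathscr O_{\mathbb P(E)}(1)$. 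Swapping the two factors, the complete linear system $|\mathscr O_{\mathbb P(E)}(1)|$ is exactly $|\mathscr O_{\mathbb P^{d-1}\times\mathbb P^1}(1,r)|$, whence the two embeddings coincide and $i(\mathbb P^{d-1}\times\mathbb P^1)=S_{r,\ldots,r}$ ($d$ copies of $r$) inside $\mathbb P^N$, with $N=dr+d-1=d(r+1)-1$. Applying Theorem \ref{mainscroll} then gives $\ara(S_{r,\ldots,r})=N-2=d(r+1)-3$, the desired number of set-theoretic defining equations.

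The main (and essentially only) obstacle is the bookkeeping in the identification step: one must fix the twisting and pushforward conventions so that the tautological bundle is matched with $\mathscr O(1,r)$ in the correct bidegree rather than with some other $\mathscr O(a,b)$, and confirm that $\mathscr O(1,r)$ is very ample so that the complete linear system indeed yields the Segre--Veronese embedding (which holds for $r\geq 1$, as $\mathscr O(1)$ on $\mathbb P^{d-1}$ and $\mathscr O(r)$ on $\mathbb P^1$ are each very ample). After this routine verification, the count $d(r+1)-3$ follows immediately from Theorem \ref{mainscroll}.
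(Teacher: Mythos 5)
Your proposal is correct and follows exactly the paper's route: the paper's entire proof is the one-line observation that $i(\mathbb P^{d-1}\times\mathbb P^1)=S_{r,\ldots,r}$ ($d$ copies) and an appeal to Theorem \ref{mainscroll}. You merely supply the (correct) verification of that identification via $\mathbb P(E\otimes L)\cong\mathbb P(E)$ and the matching of the tautological bundle with $\mathscr O(1,r)$, which the paper leaves implicit.
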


\proof This is just Theorem \ref{mainscroll} applied to $S_{r,r,\ldots,r}=i(\mathbb P^{d-1}\times\mathbb P^1)$. \qed

\begin{rem*} Using ad-hoc methods and assuming that the characteristic of $k$ is $\neq 2$, Varbaro proved Corollary \ref{scrolln1} independently in the special case $r=2$ (see  \cite{Var}, Theorem 3.11).
\end{rem*}

{\small
}

\bigskip
\bigskip

\noindent{\begin{tabular}{l}   
Universit\`a degli Studi di Genova, Dipartimento di Matematica\\
Via Dodecaneso 35, I-16146 Genova, Italy\\ 
E-mails: badescu@dima.unige.it, valla@dima.unige.it
 \end{tabular}}

\end{document}